


\documentclass[draft]{amsart}

\setlength{\textheight}{43pc}
\setlength{\textwidth}{28pc}


\usepackage{amssymb}
\usepackage{url}
\usepackage{amscd}
\usepackage{enumerate}
%
%

\usepackage[all]{xy}

\def \N{{\mathbb N}}

\def \R{{\mathbb R}}

\def \1{{\mathbb 1}}

\theoremstyle{plain}
\newtheorem{theorem}{Theorem}
\newtheorem{proposition}{Proposition}
\newtheorem{definition}{Definition}
\newtheorem{lemma}{Lemma} 
\newtheorem{corollary}{Corollary}

\theoremstyle{remark}
\newtheorem{remark}{Remark}

\newtheorem{Exemp}{Example}


\begin{document}
\title[Index of symmetry of asymmetric normed space]{Index of symmetry and topological classification of asymmetric normed spaces.}
\author{M. Bachir$^*$, G. Flores$^\dagger$}
\date{23th February 2020}

\address{$^*$ Laboratoire SAMM 4543, Universit\'e Paris 1 Panth\'eon-Sorbonne\\ Centre P.M.F. 90 rue Tolbiac\\
75634 Paris cedex 13\\
France}
\email{Mohammed.Bachir@univ-paris1.fr}

\address{$^\dagger$ Departamento de Ingenier\'ia Matem\'atica, Universidad de Chile\\
Beauchef 851, Santiago\\
Chile}
\email{gflores@dim.uchile.cl}

\begin{abstract}
Let $X,Y$  be asymmetric normed spaces and $L_c(X,Y)$ the convex cone of all linear continuous operators from $X$ to $Y$. It is known that in general,  $L_c(X,Y)$ is not a vector space. The aim of this note is to give, using the Baire category theorem, a complete cracterization on $X$ and a finite dimensional $Y$ so that  $L_c(X,Y)$ is a vector space.  For this, we introduce an index of symmetry of the space $X$ denoted $c(X)\in [0,1]$ and we give the link between the index $c(X)$  and the fact that $L_c(X,Y)$ is in turn an asymmetric normed space for every asymmetric normed space $Y$. Our study leads to a topological classification of asymmetric normed spaces.
\end{abstract}
\maketitle
\noindent {\bf 2010 Mathematics Subject Classification:} 46B20; 54E52; 46A22.

\noindent {\bf Keyword, phrase:} Asymmetric normed space, Index of symmetry, Baire category theorem, Hahn-Banach theorem, Topological classification. 
\section{Introduction}
An asymmetric normed space is a real vector space $X$ equipped with a positive, subadditive and positively homogeneous function $\|\cdot|_X$ satisfying: for all $x\in X$, $\|x|_X=\|-x|_X=0\Longleftrightarrow x=0$. (The difference between a classical seminorm and an asymmetric norm, is that the equality $\|-x|_X=\|x|_X$ does not always hold. An asymmetric norm is also called a quasi-norm in \cite{AFG1, AFG2} and \cite{RS}).  Every asymmetric normed space $(X,\|\cdot|_X)$  has an associated  normed space $X_s:=(X,\|\cdot\|_s)$ with the norm $\|x\|_s:=\max\{\|x|_X,\|-x|_X\}$, for all $x\in X$. A linear operator $T: (X,\|\cdot|_X)\to (Y,\|\cdot|_Y)$ between two asymmetric normed spaces is called continuous if it is continuous with respect to the topologies $\tau_X$ on $X$ and $\tau_Y$ on $Y$, induced respectively by the asymmetric norms. The set of all continuous linear operators from $X$ to $Y$ is denoted by $L_c(X, Y )$. The space of all linear continuous operators between the associated normed spaces $(X, \|\cdot\|_s)$ and $(Y, \|\cdot\|_s)$ is denoted by $L(X_s,Y_s)$.  In the case  $(Y,\|\cdot|_Y)=(\R,\|\cdot|_{\R})$, where $\|t|_{\R}:=\max\{0,t\}$  for all  $t\in \R$, we denote $X^{\flat}:=L_c(X,\R)$ called the dual of $(X,\|\cdot|)$, and $X^*:=L(X_s,\R_s)$, the classical topological dual of the normed space $(X, \|\cdot\|_s)$. For literature on asymmetric normed spaces, linear continuous operators  and their applications, we refer to \cite{GRS, GRS1}, \cite{Cob}, \cite{AFG1, AFG2} and \cite{RS}. Quasi-metric spaces and asymmetric norms have recently attracted a lot of interest in modern mathematics, they arise naturally when considering non-reversible Finsler manifolds \cite{CJ, DJV, JS}. For an introduction and study of asymmetric free spaces (or semi-Lipschitz free spaces), we refer to the recent paper \cite{DSV}.
\vskip5mm
The dual $X^{\flat}$ is formed by all linear continuous functionals from $ (X,\|\cdot|_X)$ to $(\R,\|\cdot|_{\R})$, or equivalently, by all linear upper semicontinuous functionals from $(X,\|\cdot|_X)$ to $(\R,|\cdot|)$. In contrast to the usual case, the dual $X^{\flat}$ and $L_c(X,Y)$ are not necessarily linear spaces, but merely convex cones contained respectively in $X^*$ and $L(X_s,Y_s)$. This being said, there is no characterization, in the literature, of the asymmetric normed spaces $X$ for which $L_c(X,Y)$  is also an asymmetric normed space. The purpose of this note is to give a complete answer to this problem when $Y$ is of finite dimensional (Corollary \ref{dimf}). It turns out that the answer to this question 
is linked to the{\it ``index of symmetry"} $c(X,\|\cdot|_X)$ of $X$ that we introduce in this note. This index is defined as follows (we will denote $c(X)$ instead of $c(X,\|\cdot|_X)$ when there is no ambiguity): 
$$c(X):=\inf_{\|x|_X=1} \|-x|_X \in [0,1].$$
This index measures the degree of symmetry of the asymmetric norm $\|\cdot|_X$. It is clear that an asymmetric norm is a norm if and only if $c(X)=1$. The asymmetry of $\|\cdot|_X$,  increases according to the decrease of the index $c(X)$. 

The aim of this note is to prove that the case $c(X)=0$ is exactly the situation where the convex cone $L_c(X, Y )$ (in particular the dual $X^{\flat}$) lacks vector structure, for every asymmetric normed space $Y$ which is not $T_1$ (i.e. for which there exists $e\in Y$ such that $\|e|_Y=1$ and $\|-e|_Y=0$, see Theorem ~\ref{Main}). In particular, we prove in Corollary \ref{dimf} the following caracterization, when $Y$ is of finite dimensional:  $L_c(X, Y ) \textnormal{  is not a vector space if and only if } c(X)=c(Y)=0.$

In consequence, from a topological point of view, the case where $c(X) = 0$ turns out to be the only interesting case in the theory of asymmetric normed spaces. Indeed, we prove in Corollary ~\ref{Car} that, $(a)$ $X^{\flat}$ is a vector space, if and only if,  $(b)$ $(X,\|\cdot|_X)$ is isomorphic to its associated normed space, if and only if, $(c)$ $L_c(X, Y )$ is a vector space isomorphic to $L(X_s,Y_s)$, for every asymmetric normed space $Y$, if and only if, $(d)$ $c(X)>0$. This means that the case where $c(X)>0$, refers to the classical framework of normed spaces. The most challenging implication is $(a) \Longrightarrow (b)$ and uses the Baire category theorem. These statements are consequences of our first main result, Theorem ~\ref{Main}. Our second main result (Theorem ~\ref{Tain}) shows that an asymmetric normed space $X$ is a $T_1$ space if and only if its dual $X^{\flat}$ is weak-star dense in $(X^*,w^*)$. 

Our study eventually leads to some topological classification of the asymmetric normed  spaces. There are three types of asymmetric normed spaces given in the following definition.

\begin{definition} \label{def1}  Let $X$ be an asymmetric normed space. We say that 

\begin{enumerate}[$(i)$]
    \item $X$ is of type I if $c(X)>0$ (necessarily a $T_1$ space).
    \item $X$ is of type II if $c(X)=0$ and $X$ is a $T_1$ space.
    \item $X$ is of type III if $X$ is not a $T_1$ space (necessarily $c(X)=0$).
\end{enumerate}

\end{definition}
\paragraph{\bf Comments} From a topological point of view, asymmetric normed spaces of type I present no new interest compared to the classical theory of normed spaces, since these spaces are isomorphic to their associated normed spaces and the same holds for their duals (see Corollary \ref{Car}). Moreover, the class of finite dimensional $T_1$ asymmetric normed spaces is contained in the class of type ~I (see Theorem \ref{ok3}). Spaces of type ~II and ~III are the interesting cases, since they differ from the framework of classical normed spaces. Spaces of type ~III include finite dimensional spaces (for example, $(\R,\|\cdot|_{\R})$), while spaces of type ~II only include infinite dimensional spaces (see Proposition \ref{ok2}). 

Briefly, the interest of asymmetric normed space theory, from a topological point of view, concerns only the following cases:

\begin{enumerate}[(i)]
    \item Infinite dimensional spaces which are $T_1$ with $c(X)=0$ (spaces of type ~II).
    \item Finite and infinite dimensional spaces $X$ which are not $T_1$ (spaces of type ~III, necessarily $c(X)=0$).
\end{enumerate}

Types II and III (corresponding to the case of $c(X)=0$) are exactly the situations where the dual $X^{\flat}$ is not a vector space. Moreover, an asymmetric normed space $X$ of type ~I will be always isomorphic to its associated normed space and $L_c(X,Y)\simeq L(X_s,Y_s)$ for every asymmetric normed space $Y$. Examples illustrating the three types of spaces will be given at the end of Section ~\ref{Cl}.

\vskip5mm
This paper is organized as follows. In Section ~\ref{P}, we recall definitions and notation from the literature. In Section ~\ref{ind}, we give basic properties of the index of symmetry. In Section ~\ref{Ma}, we state and prove our  main results (Theorem ~\ref{Main}, Corollary ~\ref{Car} and Theorem ~\ref{Tain}) and discuss some consequences. Finally, in Section ~\ref{Cl}, we give the proofs of Theorem ~\ref{ok3} and Proposition ~\ref{ok2} which motivate the aforementioned classification via the index of symmetry.
\section{Definitions and notion} \label{P}
In this section, we recall known properties of asymmetric  normed spaces that we are going to use  in the sequel.
\begin{definition}
Let $X$ be a real linear space. We say that $\|\cdot|: X \to \R^+$ is an asymmetric norm on $X$
if  the following properties hold.

\begin{enumerate}[$(i)$]
    \item For every $\lambda \geq 0$ and every $x \in X$, $\|\lambda x|= \lambda \|x|$.
    \item For every $x, y \in X$, $\|x+y|\leq \|x|+\|y|$.
    \item For every $x \in X$, if $\|x|=\|-x|=0$ then  $x=0$.
\end{enumerate}
 
\end{definition}
Let $(X,\|\cdot|_X)$ and $(Y,\|\cdot|_Y)$ be two asymmetric normed spaces. A linear operator $T: (X,\|\cdot|_X)\to (Y,\|\cdot|_Y)$ is called bounded if there exists $C\geq 0$ such that 
$$\|T(x)|_Y\leq C\|x|_X, \hspace{2mm} \forall x\in X.$$
In this case, we denote $\|T|_{L_c}:=\sup_{\|x|_X\leq 1}\|T(x)|_Y$. 
It is known (see \cite[Proposition 3.1]{Cob}) that a linear operator $T$ is bounded if and only if it is continuous, which in turn is equivalent to being continuous at $0$. Also, we know from \cite[Proposition 3.6]{Cob} that the constant $\|T|_{L_c}$ can be calculated also by the formula
$$\|T|_{L_c}=\sup_{\|x|_X= 1}\|T(x)|_Y.$$
 We can see that $L_c(X, Y )$ is a convex cone included in $L(X_s,Y_s)$ (see \cite[Proposition 3.3]{Cob}) but is not a vector space in general. Note that for each $T\in L_c(X,Y)$ we have that 
$$\|T\|_{L_s}\leq \|T|_{L_c},$$
where $\|\cdot\|_{L_s}$, denotes the usual norm of $L(X_s,Y_s)$. The function $\|\cdot|_{L_c}$ defines an asymmetric norm on $L_c(X, Y )\cap(-L_c(X, Y ))$. Recall that in the case where $Y=(\R,\|\cdot|_{\R})$, where $\|t|_{\R}=\max\{0,t\}$ for all $t\in \R$, we denote $X^{\flat}:=L_c(X,\R)$, called the dual of the asymmetric normed space $X$.  The topological dual of the associated normed space $X_s:=(X,\|\cdot\|_s)$ of $X$  is denoted $X^*$ and is equipped with the usual dual norm denoted $\|p\|_{*}=\sup_{\|x\|_s\leq 1} \langle p, x\rangle$, for all $p\in X^*$.  Note, from \cite[Theorem 2.2.2]{Cob}, that the convex cone $X^{\flat}$ is not trivial, that is, $X^{\flat}\neq \lbrace 0 \rbrace$ whenever $X\neq\lbrace 0 \rbrace$. We always have that
$$X^{\flat} \subset X^* \textnormal{ and }  \|p\|_{*}\leq \|p|_{\flat}, \textnormal{ for all } p\in X^{\flat}.$$ 
We say that $(X,\|\cdot|_X)$ and $(Y,\|\cdot|_Y)$ are isomorphic and we use the notation $(X,\|\cdot|_X) \simeq (Y,\|\cdot|_Y)$, if there exists a bijective linear operator $T: (X,\|\cdot|_X)\to (Y,\|\cdot|_Y)$ such that $T$ and $T^{-1}$ are bounded. 
\vskip5mm
\section{Index of an asymmetric normed space}  \label{ind}
We give the following properties of the index $c(X)$ of an asymmetric normed space $X$. Denote $$S_X:=\lbrace x\in X:  \|x|=1\rbrace,$$ and  $$\hat{S}_X:=\lbrace x\in S_X: \|-x|\neq 0\rbrace.$$
It is not difficult to see that 
$$\hat{S}_X=S_X \textnormal{ if and only if } \forall x\in X, \|x|=0\Longleftrightarrow x=0.$$
Recall that a topological space $X$ is called $T_1$ if for every pairs of points $(x,y)$, there exists an open set of $X$ containing $x$ and not containing $y$. It is known that a topological space $X$ is $T_1$ if and only if for every $x\in X$, the singleton $\lbrace x \rbrace$ is closed. We have the following elementary propositions.
\begin{proposition} \label{ok1}  Let $(X,\|\cdot|_X)$  be an asymmetric normed space. Then, $X$ is a $T_1$ space if and only if $\hat{S}_X = S_X$. 
\end{proposition}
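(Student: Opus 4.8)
The plan is to prove the equivalence $X \text{ is } T_1 \iff \hat{S}_X = S_X$ by working with the characterization that $X$ is $T_1$ precisely when every singleton is closed. By homogeneity and translation it suffices to understand when $\{0\}$ is closed. The key observation is that the set $\{x \in X : \|x|_X = 0\}$ is exactly the closure of $\{0\}$ in the topology $\tau_X$ induced by the asymmetric norm, since a basic neighborhood of $0$ has the form $B_\varepsilon = \{x : \|x|_X < \varepsilon\}$, and $0$ lies in the closure of $\{y\}$ iff every such neighborhood of $0$ meets $\{y\}$ appropriately. I would first make precise the statement that $\overline{\{0\}} = \{x \in X : \|x|_X = 0\}$, being careful about the direction of the asymmetry.

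First I would translate the condition $\hat{S}_X = S_X$ into the equivalent pointwise statement already flagged in the excerpt, namely that for all $x \in X$, $\|x|_X = 0 \Longleftrightarrow x = 0$. This equivalence is asserted just before the proposition, so I may invoke it directly: $\hat S_X = S_X$ holds iff the only vector with $\|x|_X = 0$ is $x = 0$. Thus the whole proposition reduces to showing that $X$ is $T_1$ iff $\|x|_X = 0 \Rightarrow x = 0$.

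For the substantive equivalence I would argue both directions through the closure of $\{0\}$. Suppose $\|x|_X = 0 \Rightarrow x = 0$; I claim $\{0\}$ is closed, equivalently $\overline{\{0\}} = \{0\}$. If $x \in \overline{\{0\}}$, then every neighborhood of $x$ contains $0$; using the basic neighborhoods $x + B_\varepsilon$ one deduces $\|{-x}|_X < \varepsilon$ for all $\varepsilon > 0$, hence $\|{-x}|_X = 0$, so by hypothesis $-x = 0$, i.e. $x = 0$. Since every singleton is a translate of $\{0\}$ and translations are homeomorphisms for $\tau_X$, all singletons are closed and $X$ is $T_1$. Conversely, if $X$ is $T_1$ then $\{0\}$ is closed; if some $x \neq 0$ satisfied $\|x|_X = 0$, then for every $\varepsilon > 0$ we have $x \in B_\varepsilon$, so $0$ lies in every neighborhood $x + B_\varepsilon$ of $-x$ (equivalently $-x \in \overline{\{0\}}$), contradicting closedness of $\{0\}$ unless $x = 0$. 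This yields $\|x|_X = 0 \Rightarrow x = 0$.

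The main obstacle, and the only point requiring genuine care, is the bookkeeping of the asymmetry when identifying the closure of $\{0\}$: because $\|\cdot|_X$ need not be symmetric, the neighborhood filter at $0$ is generated by $B_\varepsilon = \{x : \|x|_X < \varepsilon\}$, and one must track whether it is $\|x|_X$ or $\|{-x}|_X$ that appears when writing "$x$ is a limit of $0$" versus "$0$ is a limit of $x$." I expect the cleanest route is to fix once and for all that $\overline{\{0\}} = \{x : \|{-x}|_X = 0\}$ (or its mirror image, depending on the convention for $\tau_X$), verify this directly from the definition of $\tau_X$, and then let the homogeneity and the hypothesis $\|x|_X = 0 \iff x = 0$ (noting this is symmetric under $x \mapsto -x$ by axiom $(iii)$) close the argument without further subtlety.
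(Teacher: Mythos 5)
Your proof is correct and follows essentially the same route as the paper's: both reduce the $T_1$ property to the closedness of $\{0\}$ (equivalently, to the implication $\|x|_X=0\Rightarrow x=0$, which the paper notes is equivalent to $\hat S_X=S_X$), with only the bookkeeping differing --- the paper extracts a unit vector $e\in S_X$ with $\|-e|_X=0$ directly from a non-open complement of a singleton, while you identify $\overline{\{0\}}=\{x:\|-x|_X=0\}$ and invoke translation invariance. The only blemish is a typo in your converse direction, where the neighborhood of $-x$ should be written $-x+B_\varepsilon$ rather than $x+B_\varepsilon$; the surrounding argument is nonetheless sound.
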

\begin{proof} Suppose that $X$ is not $T_1$. Then, there exists $a\in X$ such that $X\setminus \lbrace a \rbrace$ is not open. Thus, there exists $b\in X\setminus \lbrace a \rbrace$ such that for every $\varepsilon >0$, $a\in B_{\|\cdot|_X}(b,\varepsilon)$. In other words, we have that $\|a-b|_X=0$. Thus, we have that $a-b\neq 0$ and $\|a-b|_X=0$. It follows that $\|b-a|_X\neq 0$. Let us set $e:=\frac{b-a}{\|b-a|_X}$. Then, we have that $e\in S_X$ and $\|-e|_X=0$. Hence, $\hat{S}_X \neq S_X$. Conversely, suppose that $\hat{S}_X \neq S_X$ and let $e\in S_X$ be such that $\|-e|_X=0$. This implies that the singleton $\lbrace 0 \rbrace$ is not closed in $X$. Hence, $X$ is not $T_1$.
\end{proof}
\begin{proposition} \label{index} Let $(X,\|\cdot|)$  be an asymmetric normed space. Suppose  that $c(X)>0$. Then, $X$ is a $T_1$ space (equivalently, $\hat{S}_X=S_X$). Moreover, we have 
$$\frac{1}{c(X)}=\sup_{x\in S_X }\|-x|=\sup_{\|x|=1, \|p|_{\flat}=1; \langle -p, x\rangle >0}\langle -p, x\rangle.$$
Therefore, we have that
$$(\sup_{x\in S_X}\|-x|)(\inf_{x\in S_X}\|-x|)=1,$$
and so $c(X)\in [0,1]$.
\end{proposition}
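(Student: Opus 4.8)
The plan is to prove the three assertions in the order stated, the first being immediate, the second (the two expressions for $1/c(X)$) being the crux, and the third a short corollary of the second.

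First I would deal with the $T_1$ property. The hypothesis $c(X)>0$ says precisely that $\|-x|\geq c(X)>0$ for every $x\in S_X$, since $c(X)=\inf_{x\in S_X}\|-x|$. Hence no unit vector $x$ satisfies $\|-x|=0$, which is exactly the statement $\hat{S}_X=S_X$, and Proposition \ref{ok1} then gives that $X$ is $T_1$. I would also record here the consequence that $\|x|=0\Rightarrow x=0$: if some $x\neq 0$ had $\|x|=0$, then $\|-x|>0$, and $e:=-x/\|-x|$ would lie in $S_X$ with $\|-e|=\|x|/\|-x|=0$, contradicting $\hat{S}_X=S_X$. This positivity of $\|-x|$ on $S_X$ is what makes the next step possible.

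The key idea for the first equality is the involution $\phi:S_X\to S_X$ defined by $\phi(x):=-x/\|-x|$, which is well defined because $\|-x|>0$ on $S_X$. A direct computation gives $\|\phi(x)|=1$ (so $\phi(x)\in S_X$), $\|-\phi(x)|=\|x|/\|-x|=1/\|-x|$, and $\phi(\phi(x))=x$, so that $\phi$ is a bijection of $S_X$ onto itself. Since $t\mapsto 1/t$ is order-reversing on $(0,\infty)$, pushing the supremum through this bijection yields
$$\sup_{x\in S_X}\|-x|=\sup_{x\in S_X}\|-\phi(x)|=\sup_{x\in S_X}\frac{1}{\|-x|}=\frac{1}{\inf_{x\in S_X}\|-x|}=\frac{1}{c(X)}.$$
For the dual expression I would invoke the Hahn–Banach theorem for asymmetric normed spaces (see \cite{Cob}), in the representation form $\|z|=\max\{\langle p,z\rangle:\ p\in X^{\flat},\ \|p|_{\flat}\leq 1\}$. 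Applying it to $z=-x$ gives $\|-x|=\max_{\|p|_{\flat}\leq 1}\langle -p,x\rangle$, whence
$$\frac{1}{c(X)}=\sup_{x\in S_X}\|-x|=\sup_{\|x|=1,\ \|p|_{\flat}\leq 1}\langle -p,x\rangle.$$
The hard part is then the routine-looking but delicate reduction to the constraints $\|p|_{\flat}=1$ and $\langle -p,x\rangle>0$. Because the supremum equals $1/c(X)\geq 1>0$, only pairs with $\langle -p,x\rangle>0$ contribute; for such a pair $p\neq 0$, and the inequality $\|p\|_{*}\leq\|p|_{\flat}$ recorded in Section \ref{P} forces $\|p|_{\flat}>0$, so replacing $p$ by $p/\|p|_{\flat}$ keeps $\langle -p,x\rangle$ positive, rescales it upward (as $\|p|_{\flat}\leq 1$), and normalizes $\|p|_{\flat}=1$. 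Since conversely the constrained set is contained in $\{\|x|=1,\ \|p|_{\flat}\leq 1\}$, the two suprema coincide.

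Finally, the product formula is immediate from the above: by definition $\inf_{x\in S_X}\|-x|=c(X)$ while the involution step gives $\sup_{x\in S_X}\|-x|=1/c(X)$, so their product is $1$. Moreover $S_X$ is nonempty and $\sup\geq\inf$, so $1/c(X)\geq c(X)$, i.e. $c(X)^2\leq 1$; together with $c(X)\geq 0$ from the definition this yields $c(X)\in[0,1]$. I expect the only genuine obstacle to be recognizing and exploiting the self-dual involution $x\mapsto -x/\|-x|$ on $S_X$, which is what turns the infimum defining $c(X)$ into the supremum, and handling the normalization to $\|p|_\flat=1$ using $\|p\|_*\le\|p|_\flat$; everything else is a direct computation.
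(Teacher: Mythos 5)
Your proof is correct and follows essentially the same route as the paper's: the deduction of $\hat{S}_X=S_X$ from $\|-x|\geq c(X)>0$ on $S_X$, the key substitution $x\mapsto -x/\|-x|$ turning the supremum of $\|-x|$ over $S_X$ into the reciprocal of the infimum (the paper phrases this as the equivalence $x\in S_X\Leftrightarrow x=-z/\|-z|$ for some $z\in S_X$ rather than as an explicit involution $\phi$), and the Hahn--Banach representation of $\|-x|$ for the dual formula. Your extra care in justifying the normalization to $\|p|_{\flat}=1$ and the restriction to $\langle -p,x\rangle>0$ only makes explicit what the paper leaves implicit.
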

\begin{proof} Let $x\in X$ such that $\|x|\neq 0$, then $\frac{x}{\|x|}\in S_X$ and $\|\frac{-x}{\|x|}|\geq c(X)>0$. It follows that $\|-x|\neq 0$. Equivalently, $\|-x|=0\Longrightarrow \|x|=0$ and so $x=0$. It follows that $\hat{S}_X=S_X$ and so $X$ is a $T_1$ space. On the other hand, 
$$\forall x\in X, (x\in S_X  \Longleftrightarrow \exists z\in S_X , x=-\frac{z}{\|-z|}).$$
Indeed, it sufices to take $z=-\frac{x}{\|-x|}$. From this,  we have that
$$\sup_{x\in S_X}\|-x|=\sup_{z\in S_X} (\frac{1}{\|-z|})=\frac{1}{\inf_{z\in S_X}\|-z|}=\frac{1}{c(X)}.$$  
Now, by the Hahn-Banach theorem (see \cite[Corollary 2.2.4]{Cob}), we have
$$\sup_{\|x|=1}\|-x|=\sup_{\|x|=1}\sup_{\|p|_{\flat}=1} \langle p, -x\rangle=\sup_{\|x|=1, \|p|_{\flat}=1; \langle -p, x\rangle> 0}\langle -p, x\rangle.$$
\end{proof}
\begin{proposition}  \label{tex}  Let $(X,\|\cdot|_X)$ and $(Y,\|\cdot|_Y)$ be asymmetric normed spaces. Suppose  that $c(X)>0$. Then, we have the following formulas:

\begin{eqnarray} \label{eq1}
c(X)\|x|\leq \|-x|\leq \frac{1}{c(X)}\|x|, \hspace{2mm} \forall x\in X,
\end{eqnarray}
\begin{eqnarray} \label{eq2}
c(X)\|x\|_s\leq \|x|\leq \|x\|_s, \hspace{2mm} \forall x\in X, 
\end{eqnarray}
\begin{eqnarray} \label{eq21}
c(X)\|T|_{L_c}\leq \|-T|_{L_c}\leq \frac{1}{c(X)} \|T|_{L_c}, \hspace{2mm} \forall T\in L_c(X,Y),
\end{eqnarray}
and,
\begin{eqnarray} \label{eq3}
\|T\|_{L_s}\leq \|T|_{L_c}\leq \frac{1}{c(X)}\|T\|_{L_s}, \hspace{2mm} \forall T\in L_c(X,Y).
\end{eqnarray}
Consequently, we have that $(X,\|\cdot|_X)\simeq (X,\|\cdot\|_s)$, $( L_c(X,Y),\|\cdot|_{L_c})$ is asymmetric normed space and $( L_c(X,Y),\|\cdot|_{L_c})\simeq ( L(X_s,Y_s),\|\cdot\|_{L_s})$. Moreover, $c( L_c(X,Y))\geq c(X)$.

\end{proposition}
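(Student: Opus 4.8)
The plan is to establish the four displayed inequalities in order, deriving each from the previous one together with the definition $c(X)=\inf_{\|z|=1}\|-z|$ and Proposition \ref{index}, and then to read off the four asserted consequences.

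First, for \eqref{eq1} I would dispose of $x=0$ trivially and, for $x\neq 0$, use that $c(X)>0$ forces $\|x|_X\neq 0$ (Proposition \ref{index}), so that $x/\|x|_X\in S_X$. Applying the definition of $c(X)$ to $z=x/\|x|_X$ gives the left inequality $c(X)\|x|_X\le\|-x|_X$, while the identity $\sup_{z\in S_X}\|-z|=1/c(X)$ from Proposition \ref{index} applied to the same $z$ gives the right inequality. Inequality \eqref{eq2} then follows: the bound $\|x|_X\le\|x\|_s$ is immediate from $\|x\|_s=\max\{\|x|_X,\|-x|_X\}$, and for the lower bound $c(X)\|x\|_s\le\|x|_X$ I would split on which of $\|x|_X,\|-x|_X$ realizes the maximum, the nontrivial case being exactly \eqref{eq1} applied to $-x$.

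For the operator inequalities I would write $\|-T|_{L_c}=\sup_{\|x|_X=1}\|T(-x)|_Y$ and bound, for each $x\in S_X$, $\|T(-x)|_Y\le\|T|_{L_c}\,\|-x|_X\le \tfrac{1}{c(X)}\|T|_{L_c}$ by \eqref{eq1}; taking the supremum yields the right half of \eqref{eq21}. The crucial observation here is that this already shows $\|-T|_{L_c}<\infty$, i.e. $-T\in L_c(X,Y)$, so that $L_c(X,Y)$ is stable under negation; the left half of \eqref{eq21} is then obtained by applying the right half to $-T$ in place of $T$. The right inequality of \eqref{eq3} comes the same way: for $\|x|_X=1$ one has $\|T(x)|_Y\le\|T(x)\|_s\le\|T\|_{L_s}\|x\|_s\le\tfrac{1}{c(X)}\|T\|_{L_s}$ using \eqref{eq2}, and the left inequality $\|T\|_{L_s}\le\|T|_{L_c}$ is the elementary fact already recorded in Section \ref{P}.

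Finally I would harvest the consequences. The isomorphism $(X,\|\cdot|_X)\simeq(X,\|\cdot\|_s)$ is witnessed by the identity, bounded both ways with constants $1$ and $1/c(X)$ by \eqref{eq2}. Since $L_c(X,Y)$ is a convex cone stable under negation it is a vector space, and $\|\cdot|_{L_c}$ is an asymmetric norm on it (the separation axiom follows because $\|T|_{L_c}=\|-T|_{L_c}=0$ forces $\|T(x)\|_s=0$ for $x\in S_X$, hence $T=0$). The identification $L_c(X,Y)=L(X_s,Y_s)$ as sets requires the nontrivial inclusion $\supseteq$, which follows from $\|T(x)|_Y\le\|T\|_{L_s}\|x\|_s\le\tfrac{1}{c(X)}\|T\|_{L_s}\|x|_X$; then \eqref{eq3} shows the identity is an isomorphism onto $(L(X_s,Y_s),\|\cdot\|_{L_s})$. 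The bound $c(L_c(X,Y))\ge c(X)$ is immediate from the left half of \eqref{eq21} restricted to $\|T|_{L_c}=1$. The computations are routine; the only points demanding care are the stability of $L_c(X,Y)$ under negation (which is what promotes it from a cone to a vector space) and the set equality $L_c(X,Y)=L(X_s,Y_s)$, both of which hinge on the upper bounds $\|-x|_X\le\tfrac{1}{c(X)}\|x|_X$ and $\|x\|_s\le\tfrac{1}{c(X)}\|x|_X$.
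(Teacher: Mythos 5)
Your proposal is correct and follows essentially the same route as the paper: the paper's proof simply states that \eqref{eq1} follows from Proposition \ref{index} and that the remaining assertions are consequences of it, which is exactly the chain of deductions you carry out in detail. Your explicit identification of the two delicate points (stability of $L_c(X,Y)$ under negation, and the set equality $L_c(X,Y)=L(X_s,Y_s)$) is a faithful and complete filling-in of what the authors leave implicit.
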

\begin{proof} Formula (\ref{eq1}) follows easily from Proposition \ref{index}, the rest of the assertions are simple consequences of this formula.
\end{proof}
The above proposition says that if $c(X)>0$, then  $(X,\|\cdot|_X)$ and $(X,\|\cdot\|_s)$ (and also $( L_c(X,Y),\|\cdot|_{L_c})$ and $( L(X_s,Y_s),\|\cdot\|_{L_s})$) are topologically the same. In this case, the topology of the asymmetric spaces coincides with the topology of  normed spaces.
\begin{remark} We proved in Proposition \ref{index} that if $c(X)>0$, then $\hat{S}_X =S_X$. The converse of this fact is not true in general, see for instance Example \ref{ex}.
\end{remark}


\section{The main results}\label{Ma}
This section is devoted to the main results Theorem \ref{Main}, Theorem \ref{Tain}, Corollary \ref{dimf} and their consequences.

Let $(X,\|\cdot|_X)$  and $(Y,\|\cdot|_Y)$ be asymmetric normed spaces. For each $r>0$, denote $$B_{L_c}(0,r):=\lbrace T\in L_c(X,Y): \|T|_{L_c}< r\rbrace$$ and $$\overline{B}_{L_c}(0,r):=\lbrace T\in L_c(X,Y): \|T|_{L_c}\leq r\rbrace.$$ 
Recall that $(Y,\|\cdot|_Y)$ is called a biBanach asymmetric normed space, if  its associated normed space $(Y,\|\cdot\|_s)$ is a Banach space.  In this case, the space $(L(X_s,Y_s),\|\cdot\|_{L_s})$  is also a Banach space. 
We need the following lemma.
\begin{lemma} \label{lema}  Let $(X,\|\cdot|_X)$ be an asymmetric normed space and $(Y,\|\cdot|_Y)$ be an asymmetric normed biBanach space.  Let $r>0$, then $\overline{B}_{L_c}(0,r)$ is a closed subset of the Banach space $(L(X_s,Y_s),\|\cdot\|_{L_s})$ and the open ball $B_{L_c}(0,r)$ is dense in $\overline{B}_{L_c}(0,r)$ for the norm $\|\cdot\|_{L_s}$.
\end{lemma}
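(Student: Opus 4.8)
The plan is to treat the two assertions separately: the closedness of $\overline{B}_{L_c}(0,r)$ via a sequential limit argument, and the density of $B_{L_c}(0,r)$ via a simple rescaling. Throughout, the two facts I would lean on are the subadditivity of $\|\cdot|_Y$ and the domination $\|y|_Y\le\|y\|_s$ (which holds since $\|y\|_s=\max\{\|y|_Y,\|-y|_Y\}$), together with the equivalence of boundedness and continuity recorded in \cite[Proposition 3.1]{Cob}.

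For the closedness, I would take a sequence $(T_n)$ in $\overline{B}_{L_c}(0,r)$ converging to some $T$ for the norm $\|\cdot\|_{L_s}$. Since $(L(X_s,Y_s),\|\cdot\|_{L_s})$ is a Banach space, the limit $T$ already lies in $L(X_s,Y_s)$, so the only task is to show that $T$ in fact belongs to $L_c(X,Y)$ with $\|T|_{L_c}\le r$. The key estimate fixes $x\in X$ and writes, using subadditivity, the bound $\|T_n|_{L_c}\le r$, the domination $\|\cdot|_Y\le\|\cdot\|_s$, and the definition of the operator norm in $L(X_s,Y_s)$:
\[
\|T(x)|_Y\le \|T_n(x)|_Y+\|T(x)-T_n(x)|_Y\le r\|x|_X+\|T-T_n\|_{L_s}\,\|x\|_s .
\]
Letting $n\to\infty$ yields $\|T(x)|_Y\le r\|x|_X$ for every $x\in X$, which shows at once that $T$ is bounded (hence continuous, so $T\in L_c(X,Y)$) and that $\|T|_{L_c}\le r$; thus $T\in\overline{B}_{L_c}(0,r)$, proving it is closed.

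For the density, given $T\in\overline{B}_{L_c}(0,r)$ and $\varepsilon>0$, I would rescale. Since $L_c(X,Y)$ is a convex cone, $\lambda T\in L_c(X,Y)$ for every $\lambda\in(0,1)$, and by positive homogeneity $\|\lambda T|_{L_c}=\lambda\|T|_{L_c}\le\lambda r<r$, so $\lambda T\in B_{L_c}(0,r)$. Because $\|T-\lambda T\|_{L_s}=(1-\lambda)\|T\|_{L_s}$, choosing $\lambda$ close enough to $1$ makes this quantity smaller than $\varepsilon$, which shows that $B_{L_c}(0,r)$ is $\|\cdot\|_{L_s}$-dense in $\overline{B}_{L_c}(0,r)$.

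I expect the only genuinely delicate point to be in the closedness step: ensuring that $\|\cdot\|_{L_s}$-convergence, which is a symmetric two-sided control, is strong enough to recover the one-sided asymmetric bound $\|T(x)|_Y\le r\|x|_X$. This is precisely what the combination of subadditivity with the inequality $\|y|_Y\le\|y\|_s$ provides. The biBanach hypothesis on $Y$ enters only to guarantee that the ambient space $L(X_s,Y_s)$ is complete, so that the phrase \emph{closed subset of the Banach space} is meaningful; the density is comparatively immediate once one observes that scaling by $\lambda<1$ strictly decreases the asymmetric operator norm while moving the operator arbitrarily little for $\|\cdot\|_{L_s}$.
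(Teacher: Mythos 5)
Your proof is correct and follows essentially the same route as the paper: the same one-sided estimate $\|T(x)|_Y\le \|(T-T_n)(x)|_Y+\|T_n(x)|_Y\le \|T-T_n\|_{L_s}\|x\|_s+r\|x|_X$ for closedness, and the same rescaling $T\mapsto\lambda T$ with $\lambda\uparrow 1$ (the paper takes $\lambda=1-\frac{1}{n}$) for density. No gaps.
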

\begin{proof} Let $(T_n)$ be a sequence in $\overline{B}_{L_c}(0,r)$ that converges to $T\in L(X_s,Y_s)$ for the norm $\|\cdot\|_{L_s}$ and let us prove that $T\in \overline{B}_{L_c}(0,r)$. Indeed, for all $n\in \N$ and all $x\in X$, we have 
\begin{eqnarray*}
\|T(x)|_Y &\leq& \|(T-T_n)(x)|_Y +\|T_n(x)|_Y\\
                              &\leq& \|T-T_n\|_{L_s}\|x\|_s+\|T_n|_{L_c}\|x|_X\\
                              &\leq& \|T-T_n\|_{L_s}\|x\|_s+r\|x|_X.
\end{eqnarray*}
Sending $n$ to $+\infty$, we get that $\|T(x)|_Y \leq r\|x|_X$, for all $x\in X$. It follows that $T\in \overline{B}_{L_c}(0,r)$ which implies that $\overline{B}_{L_c}(0,r)$ is a  closed subset of the space $(L(X_s,Y_s),\|\cdot\|_{L_s})$. 
To see that $B_{L_c}(0,r)$ is dense in $\overline{B}_{L_c}(0,r)$ for the norm $\|\cdot\|_{L_s}$, let $T\in \overline{B}_{L_c}(0,r)$ and consider the sequence $T_n=(1-\frac{1}{n})T$ so that $\|T_n|_{L_c}\leq (1-\frac{1}{n})r< r$. Then, $T_n \in B_{L_c}(0,r)$ for all $n \in \N$ and $\|T-T_n\|_{L_s}=\frac{1}{n}\|T\|_{L_s}\to 0$.
\end{proof}
\begin{remark} Since $\overline{B}_{L_c}(0,r)$ is a closed subset of $(L(X_s,Y_s),\|\cdot\|_{L_s})$, then $(\overline{B}_{L_c}(0,r), \|\cdot\|_{L_s})$ is a complete metric space and so the Baire cathegory theorem applies. However, we dont know if  the whole space $(L_c(X,Y), \|\cdot\|_{L_s})$ is a Baire space. In general it is not closed in $(L(X_s,Y_s),\|\cdot\|_{L_s})$ (see Corollary ~\ref{clos} in the case where $(Y,\|\cdot|_Y)=(\R,\| \cdot|_{\R}))$. 
\end{remark}
\subsection{The first main result and consequences.}
Our first main result is the following theorem, which gives a necessary and sufficient condition so that $L_c(X,Y)$ is not a vector space under the assumption that $Y$ is not a $T_1$ space. This frame corresponds to the case where $c(X) = 0$, and it is this case that makes the theory of asymmetric normed spaces interesting. 
\begin{remark}\label{remv1} $(i)$ Note that our work in this paper is of interest even in the case where $(Y,\|\cdot|_Y)=(\R,\|\cdot|_{\R})$ and the goal is primarily to characterize asymmetric normed spaces $X$ for which $X^{\flat}$ is a vector space. This seems to be non-trivial without the help of the Baire category theorem.

$(ii)$  Let $(X,\|\cdot|_X)$ and $(Y,\|\cdot|_Y)$ be asymmetric normed spaces. In general, the convex cone $ X^{\flat}$ not embedds in $L_c(X,Y)$.  However, if we assume that there exists $e\in Y$  such that $\|e|_Y=1$ and $\|-e|_Y=0$ (that is $Y$ is not a $T_1$ space, see Proposition \ref{ok1}) then, for every $p\in X^{\flat}$ we have that $pe\in L_c(X,Y)$ and $\|pe|_{L_c}=\|p|_{\flat}$, where $pe: x\mapsto \langle p, x \rangle e$. This is due to the fact that, for each $x\in X$, $\|p(x)e|_Y=p(x)$ if $p(x)\geq 0$ and $\|p(x)e|_Y=0$ if $p(x)\leq 0$. In other words, the convex cone $(X^{\flat},\|\cdot|_{\flat})$ embedds isometricaly in $(L_c(X,Y),\|\cdot|_{L_c})$, whenever $Y$ is not a $T_1$ space. This remark will be used in the proof of Theorem \ref{Main}.

\end{remark}
\begin{theorem} \label{Main} Let $(X,\|\cdot|_X)$  be asymmetric normed space. Then, the following assertions are equivalent.

\begin{enumerate}[$(i)$]
    \item $c(X)=0$.
    \item For every  biBanach asymmetric normed space $(Y,\|\cdot|_Y)$ for which there exists $e\in Y$ such that $\|e|_Y=1$ and $\|-e|_Y=0$ (that is, $Y$ is not a $T_1$ space) and every $H\in L_c(X,Y)$ and every $r>0$, the set $$\mathcal{G}(H):=\lbrace T\in \overline{B}_{L_c}(0,r): -(H+T)\not \in L_c(X,Y)\rbrace,$$
is a $G_\delta$ dense subset of $(\overline{B}_{L_c}(0,r), \|\cdot\|_{L_s})$. In particular, $L_c(X,Y)$ is not a vector space for every  biBanach asymmetric normed space $(Y,\|\cdot|_Y)$ which is not a $T_1$ space.
    \item There exists an asymmetric normed space $(Y,\|\cdot|_Y)$ such that the convex cone  $L_c(X,Y)$ is not a vector space.
\end{enumerate}

\end{theorem}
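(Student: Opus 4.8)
The plan is to prove the cyclic chain of implications $(i)\Rightarrow(ii)\Rightarrow(iii)\Rightarrow(i)$. The implication $(ii)\Rightarrow(iii)$ is essentially free: statement $(ii)$ explicitly asserts that $L_c(X,Y)$ fails to be a vector space for any biBanach $Y$ that is not $T_1$, and such spaces exist (for instance $Y=(\R,\|\cdot|_\R)$, which is complete and has $e=1$ with $\|e|_\R=1$, $\|-e|_\R=0$), so we obtain the required particular $Y$ witnessing $(iii)$. The implication $(iii)\Rightarrow(i)$ I would prove contrapositively: if $c(X)>0$, then by Proposition \ref{tex} (formula \eqref{eq21}) every $T\in L_c(X,Y)$ satisfies $\|-T|_{L_c}\leq \frac{1}{c(X)}\|T|_{L_c}<\infty$, so $-T\in L_c(X,Y)$ as well; hence $L_c(X,Y)$ is stable under negation and, being a convex cone closed under negation, is a vector space for \emph{every} asymmetric normed $Y$. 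This contradicts $(iii)$, so $(iii)$ forces $c(X)=0$.

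The substantial implication is $(i)\Rightarrow(ii)$, and this is where the Baire category theorem enters. Fix a biBanach $Y$ with a vector $e$ as above, fix $H\in L_c(X,Y)$ and $r>0$. By Lemma \ref{lema}, $(\overline{B}_{L_c}(0,r),\|\cdot\|_{L_s})$ is a closed subset of the Banach space $(L(X_s,Y_s),\|\cdot\|_{L_s})$, hence a complete metric space to which Baire applies. The first task is to show $\mathcal{G}(H)$ is $G_\delta$: I would write its complement, the set of $T$ with $-(H+T)\in L_c(X,Y)$, as an increasing union over $n$ of the closed sets $\{T\in\overline{B}_{L_c}(0,r): \|-(H+T)|_{L_c}\leq n\}$, each closed by the same closedness argument used in Lemma \ref{lema} (boundedness of $\|-(H+T)(\cdot)|_Y$ is a condition preserved under $\|\cdot\|_{L_s}$-limits). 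Thus the complement is $F_\sigma$ and $\mathcal{G}(H)$ is $G_\delta$.

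For density, I would exploit Remark \ref{remv1}$(ii)$: since $Y$ is not $T_1$, the map $p\mapsto pe$ embeds $(X^\flat,\|\cdot|_\flat)$ isometrically into $(L_c(X,Y),\|\cdot|_{L_c})$. The hypothesis $c(X)=0$ means $\inf_{\|x|=1}\|-x|=0$, so there is a sequence $x_n\in S_X$ with $\|-x_n|\to 0$; dually this produces functionals $p_n\in X^\flat$ with $\|p_n|_\flat=1$ but $\|-p_n|_\flat$ arbitrarily large (or infinite), the key asymmetry that obstructs a vector structure. The plan is: given any $T_0\in\overline{B}_{L_c}(0,r)$ and $\varepsilon>0$, perturb $T_0$ by a small multiple $\delta\,p_n e$ of such an embedded functional, chosen with $\|\delta p_n e\|_{L_s}<\varepsilon$ (small in the \emph{symmetric} norm, which is possible since $\|p_n\|_*\leq\|p_n|_\flat=1$), while $\|-\delta p_n e|_{L_c}$ is forced large because $\|-p_n|_\flat$ is large. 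This perturbation keeps us inside $\overline{B}_{L_c}(0,r)$ after a harmless rescaling via Lemma \ref{lema}'s density of the open ball, yet pushes $-(H+T)$ outside $L_c(X,Y)$, placing the perturbed operator in $\mathcal{G}(H)$.

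The main obstacle I anticipate is the density step: one must carefully arrange the perturbation so that it is simultaneously $\|\cdot\|_{L_s}$-small (to stay near $T_0$), norm-admissible (to remain in $\overline{B}_{L_c}(0,r)$, using the rescaling from Lemma \ref{lema}), and strong enough in the asymmetric direction to guarantee $\|-(H+T)|_{L_c}=+\infty$. Making the quantitative choices of $\delta$ and $n$ compatible — trading the smallness of $\|-x_n|$ against the required size of the blow-up while controlling the contribution of the fixed operator $H$ — is the delicate bookkeeping. Once density is established, Baire's theorem gives that the countable intersection realizing $\mathcal{G}(H)$ is dense, and in particular nonempty, which immediately yields that $L_c(X,Y)$ is not closed under negation, hence not a vector space.
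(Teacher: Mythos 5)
Your overall architecture matches the paper's: the two easy implications, Lemma \ref{lema} to get a complete metric space, the embedding $p\mapsto pe$ from Remark \ref{remv1}, and a Baire argument for $(i)\Rightarrow(ii)$. Your $(iii)\Rightarrow(i)$ via the contrapositive and formula (\ref{eq21}) of Proposition \ref{tex} is a legitimate (arguably cleaner) alternative to the paper's direct construction of a sequence $z_n$ with $\|z_n|=1$, $\|-z_n|\to 0$. Your $G_\delta$ decomposition (complement as the $F_\sigma$ union of the closed sets $\{T:\|-(H+T)(x)|_Y\le n\|x|_X\ \forall x\}$) is equivalent to the paper's $\mathcal{G}(H)=\cap_k O_k$ with $O_k$ open.

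However, the density step as you describe it has a genuine gap: you aim a \emph{single} perturbation $T_0+\delta p_n e$ at the target ``$\|-(H+T)|_{L_c}=+\infty$'', i.e.\ at landing directly in $\mathcal{G}(H)$. This cannot work. For any fixed $n$, the functional $p_n$ produced by Hahn--Banach has $\|-p_n|_{\flat}$ large but possibly finite (indeed $-p_n$ may well belong to $X^{\flat}$), so with $H=T_0=0$ the perturbed operator $\delta p_n e$ satisfies $\|-\delta p_n e|_{L_c}=\delta\|-p_n|_{\flat}<\infty$ and is \emph{not} in $\mathcal{G}(0)$. To land directly in $\mathcal{G}(H)$ you would need a single $p$ with $-p\notin X^{\flat}$ — but producing such a $p$ from $c(X)=0$ is precisely the hard content of the theorem (see the remark following the paper's proof), so your plan is circular at that point. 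The correct target of the perturbation is the open set $O_k:=\{T:\exists x,\ \|-(H+T)(x)|_Y>k\|x|_X\}$ for a \emph{prescribed} $k$: choose $n$ so that $\varepsilon/\|-a_n|_X-\|H+T|_{L_c}>k$, normalize $z_n=-a_n/\|-a_n|_X$ as the witness point, and conclude that each $O_k$ is dense; only then does Baire upgrade this to density of $\mathcal{G}(H)=\cap_k O_k$. Two further points your sketch omits: the normalization $z_n=-a_n/\|-a_n|_X$ breaks down when some $\|-a_{n_0}|_X=0$ (the non-$T_1$ case), which the paper treats separately and in which the witness is $-a_{n_0}$ itself since $k\|-a_{n_0}|_X=0$; and what makes the argument work is not the abstract largeness of $\|-p_n|_{\flat}$ but the Hahn--Banach normalization $\langle p_n,-z_n\rangle=\|-z_n|_X$ providing an explicit witness point at which to evaluate $-(H+T+\varepsilon p_n e)$.
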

\begin{proof}   First, let us note that we can assume without loss of generality that $r=1$. 

$(i) \Longrightarrow (ii)$ For each $k\in \N$, let us set
\begin{eqnarray*}
O_k:=\lbrace T\in \overline{B}_{L_c}(0,1)\,|\, (\exists x_k\in X)\,  \|-(H+T)(x_k)|_Y> k\|x_k|_X \rbrace.
\end{eqnarray*}
 Clearly, we have that $\cap_{k\in \N} O_k=\mathcal{G}(H)$. By the Baire theorem, $\mathcal{G}(H)$ will be a $G_\delta$ dense subset of $(\overline{B}_{L_c}(0,1), \|\cdot\|_{L_s})$ whenever, for each $k\in \N$, the set $O_k$ is open and dense in the complete metric space $(\overline{B}_{L_c}(0,1), \|\cdot\|_{L_s})$ (see Lemma \ref{lema}).

Let us prove that $O_k$ is open in $(\overline{B}_{L_c}(0,1), \|\cdot\|_{L_s})$, for each $k\in \N$. Let $T\in O_k$ and $0< \varepsilon< \frac{\|-(H+T)(x_k)|_Y - k\|x_k|_X}{\|x_k\|_s}$. Let $S\in \overline{B}_{L_c}(0,1)$ such that $\|S-T\|_{L_s}<\varepsilon$.  We have that
\begin{eqnarray*}
\|-(H+S)(x_k)|_Y&\geq&  \|-(H+T)(x_k)|_Y -\|(S-T)(x_k)|_Y \\
                                       &\geq& \|-(H+T)(x_k)|_Y -\|(S-T)(x_k)\|_s  \\
                                        &\geq& \|-(H+T)(x_k)|_Y -\|S-T\|_{L_s}\|x_k\|_s\\
                                       &>& \|-(H+T)(x_k)|_Y -\varepsilon\|x_k\|_s\\
                                       &>&  k\|x_k|_X.
\end{eqnarray*}
Thus, $S\in O_k$ for every $S\in \overline{B}_{L_c}(0,1)$ such that $\|S-T\|_{L_s}<\varepsilon$. Hence, $O_k$ is open in $(\overline{B}_{L_c}(0,1), \|\cdot\|_{L_s})$.

Now, let us prove that $O_k$ is dense in $(\overline{B}_{L_c}(0,1), \|\cdot\|_{L_s})$, for each $k\in \N$. Since $(B_{L_c}(0,1), \|\cdot\|_{L_s})$ is dense in $(\overline{B}_{L_c}(0,1), \|\cdot\|_{L_s})$ (by Lemma \ref{lema}), it suffices to prove that $O_k$ is dense in $(B_{L_c}(0,1), \|\cdot\|_{L_s})$.  Let  $T\in B_{L_c}(0,1)$ and $0<\varepsilon< 1-\|T|_{L_c}$. Since $c(X)=0$, there exists a sequence $(a_n)\subset X$  such that $\|a_n|_X=1 $ for all $n\in \N$ and  $\|-a_n|_X\to  0$. Let us set $I:=\lbrace n\in \N: \|-a_n|_X=0\rbrace$. Recall that since $\|e|_Y=1$ and $\|-e|_Y=0$, we  have that $\|p e|_{L_c}=\|p|_{\flat}$ for every $p\in X^{\flat}$, where $pe: x\mapsto \langle p,x \rangle e$ (see Remark \ref{remv1} $(ii)$). We have two cases:
\vskip5mm
{\it Case 1. $I=\emptyset$.} In this case, for all $n\in \N$,  let $z_n:=\frac{-a_n}{\|-a_n|_X}$. We see that $\|z_n|_X=1$, $-z_n=\frac{a_n}{\|-a_n|_X}$ and $\|-z_n|_X=\frac{1}{\|-a_n|_X}$.
Using the Hahn-Banach theorem \cite[Theorem 2.2.2]{Cob}, for each $n\in \N$, there exists $p_n\in X^{\flat}$ such that $\|p_n|_{\flat}=1$ and $\langle p_n, -z_n\rangle =\|-z_n|_X>0$. Now,  consider the operators 
\begin{eqnarray*}
T+\varepsilon p_n e: (X,\|\cdot|_X) &\to& (Y,\|\cdot|_Y)\\
                                   x &\mapsto& T(x)+\varepsilon \langle p_n, x\rangle e.
\end{eqnarray*}
Thus, we have, $\|T+\varepsilon p_{n}e|_{L_c}\leq\|T|_{L_c}+ \varepsilon\|p_n e|_{L_c}=\|T|_{L_c}+ \varepsilon\|p_n|_{\flat}=  \|T|_{L_c}+ \varepsilon<1$, so that $T+\varepsilon p_{n} e\in B_{L_c}(0,1)\subset \overline{B}_{L_c}(0,1)$. On the other hand,
\begin{eqnarray*}
\|-(H+T+\varepsilon p_n e)(z_n)|_Y&=&\|(H+T+\varepsilon p_n e)(-z_n)|_Y\\
                                                               &\geq& \|\varepsilon p_n(-z_n) e|_Y - \|(H+T)(z_n)|_Y\\
                                                               &=&  \varepsilon\|-z_n|_X- \|(H+T)(z_n)|_Y\\
                                                               &=& \frac{\varepsilon}{\|-a_n|_X} - \|(H+T)(z_n)|_Y\\
                                                               &\geq& \frac{\varepsilon}{\|-a_n|_X}- \|H+T|_{L_c}.
\end{eqnarray*}
Since  $\|-a_n|_X\to 0$, when $n\to +\infty$, there exists a subsequence $(a_{n_k})$ such that $\frac{\varepsilon}{\|-a_{n_k}|_X}- \|H+T|_{L_c} > k$ for each $k\in \N$. Hence,  for each $k\in \N$, we have that $\|z_{n_k}|_X=1$  and
\begin{eqnarray} \label{eq*}
\|-(H+T+\varepsilon p_{n_k} e)(z_{n_k})|_Y&>&  k=k\|z_{n_k}|_X.
\end{eqnarray}
 From  formula (\ref{eq*}), we have that $T+\varepsilon p_{n_k}e\in O_k$.  Since, $$\|(T+\varepsilon p_{n_k} e)-T\|_{L_s}=\|\varepsilon p_{n_k} e\|_{L_s}=\varepsilon\|p_{n_k}\|_* \leq \varepsilon\| p_{n_k}|_{\flat}=\varepsilon,$$ it follows that $O_k$ is dense in $(\overline{B}_{L_c}(0,1), \|\cdot\|_{L_s})$. 
\vskip5mm
{\it Case 2. $I\neq \emptyset$.} In this case,  there exists $n_0\in I$ such that $\|a_{n_0}|_X=1$ and $\|-a_{n_0}|_X=0$. Using the Hahn-Banach theorem \cite[Theorem 2.2.2]{Cob},  let $p\in X^{\flat} \setminus \lbrace 0 \rbrace$ such that $\|p|_{\flat}=1$ and $\langle p,  a_{n_0}\rangle =\|a_{n_0}|_X=1$. Thus, we have that
\begin{eqnarray*}
\|-(H+T+\varepsilon p e)(-a_{n_0})|_Y&=&\|(-H-T-\varepsilon p e)(-a_{n_0})|_Y \\
                                                               &\geq& \|\varepsilon \langle- p, -a_{n_0} \rangle e|_Y - \|-(H+T)(a_{n_0})|_Y\\
                                                               &=& \varepsilon\|\langle p, a_{n_0} \rangle e|_Y- \|(H+T)(-a_{n_0})|_Y\\
                                                               &\geq& \varepsilon - \|H+T|_{L_c}\|-a_{n_0}|_X\\
                                                               &=& \varepsilon \\
                                                               &>& 0=k\|-a_{n_0}|_X.
\end{eqnarray*}
On the other hand, we have that $\|T+\varepsilon pe|_{L_c}\leq\|T|_{L_c}+ \varepsilon\|p e|_{L_c}=\|T|_{L_c}+ \varepsilon\|p|_{\flat}=  \|T|_{L_c}+ \varepsilon<1$, so that $T+\varepsilon p e\in B_{L_c}(0,1)\subset \overline{B}_{L_c}(0,1)$. Thus, $T+\varepsilon p e \in O_k$ and $\|(T+\varepsilon p e) -T\|_{L_s}=\|\varepsilon p e\|_{L_s}=\varepsilon \|p\|_{*}\leq \varepsilon \|p|_{\flat}= \varepsilon$. Hence, $O_k$ is dense in $(\overline{B}_{L_c}(0,1), \|\cdot\|_{L_s})$. 
\vskip5mm
Hence, in both cases, we have that $\cap_{k\in \N} O_k=\mathcal{G}(H)$ is a $G_\delta$ dense subset of $(\overline{B}_{L_c}(0,1),\|\cdot\|_{L_s}) $.
\vskip5mm
$(ii) \Longrightarrow (iii)$ is trivial.

$(iii) \Longrightarrow (i)$ Suppose that there exists an asymmetric normed space $Y$ such that the convex cone  $L_c(X,Y)$ is not a vector space. Then, there exists $T\in L_c(X,Y) \setminus \lbrace 0 \rbrace$, such that $-T\not \in L_c(X,Y)$. Thus, for each $n\in \N$, there exists $x_n\in X$ such that $\|-T(x_n)|_Y> n\|x_n|_X$. It follows that, for all $n\in \N$ 
$$n\|x_n|_X<\|-T(x_n)|_Y=\|T(-x_n)|_Y \leq \|T|_{L_c}\|-x_n|_X.$$
 Let us set $z_n=\frac{-x_n}{\|-x_n|_X}$ (since $\|-x_n|_X\neq 0$) for all $n\in \N$. Then,  for all $n\in \N\setminus \lbrace 0\rbrace$, we have $\|z_n|_X=1$, $\|-z_n|=\frac{\|x_n|}{\|-x_n|}< \frac{\|T|_{L_c}}{n}\to 0$. Hence, $c(X)=0$.
\end{proof}
\begin{remark} It does not seem obvious to show part $(i) \Longrightarrow (iii)$ directly without using the Baire theorem, in other words, without passing through part $(i) \Longrightarrow (ii)$. In fact the real difficulty is even in one dimensional, that is, to proof  the following implication
$$c(X)=0 \Longrightarrow \exists p \in X^{\flat} \textnormal{ s.t } -p \not \in X^{\flat}.$$
\end{remark}
A consequence of  Theorem \ref{Main}, under the condition that $Y$ is not a $T_1$ space, is:  $ L_c(X,Y)$ is not a vector space if and only if $c(X)=0$. Note that  the condition that $Y$ is not a $T_1$ space  implies trivially that $c(Y)=0$. The converse is not true in general (for example if $Y$ is the space $( l^{\infty}(\N^*),\|\cdot|_{\infty})$ in Example \ref{ex} below). However, these two conditions are equivalent when $Y$ is of finite dimensional.
\begin{lemma} \label{T1} Let $(Y,\|\cdot|_Y)$ be an asymmetric normed space of finite dimensional. Then, $c(Y)=0$ if and only if $Y$ is not a $T_1$ space.
\end{lemma}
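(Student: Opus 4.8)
The plan is to treat the two implications separately, noting that only the direction $c(Y)=0 \Longrightarrow Y$ is not $T_1$ requires finite dimensionality. For the easy direction, if $Y$ is not a $T_1$ space then it cannot satisfy $c(Y)>0$, since Proposition \ref{index} shows that $c(Y)>0$ forces $Y$ to be $T_1$; the contrapositive gives $c(Y)=0$ at once, and this argument uses no dimension hypothesis.

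For the converse, suppose $c(Y)=0$. By definition of the infimum there is a sequence $(x_n)\subset Y$ with $\|x_n|_Y=1$ for all $n$ and $\|-x_n|_Y\to 0$. First I would record that the asymmetric norm is $1$-Lipschitz for the associated norm: subadditivity gives $\|x|_Y-\|y|_Y\le \|x-y|_Y\le \|x-y\|_s$, and since $\|\cdot\|_s$ is symmetric the same bound holds after interchanging $x$ and $y$, so $\lvert \|x|_Y-\|y|_Y \rvert\le \|x-y\|_s$; an identical computation shows $x\mapsto\|-x|_Y$ is $1$-Lipschitz as well. In particular both maps are continuous on the finite-dimensional normed space $(Y,\|\cdot\|_s)$.

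Next I would invoke finite dimensionality. Since $\|-x_n|_Y\to 0$, for $n$ large we have $\|x_n\|_s=\max\{\|x_n|_Y,\|-x_n|_Y\}=1$, so $(x_n)$ eventually lies on the unit sphere of $(Y,\|\cdot\|_s)$, which is compact because $Y$ is finite dimensional. Extracting a subsequence $x_{n_k}\to e$ in $\|\cdot\|_s$ and using the continuity just established, I get $\|e|_Y=\lim_k\|x_{n_k}|_Y=1$ and $\|-e|_Y=\lim_k\|-x_{n_k}|_Y=0$. Hence $e\in S_Y$ with $\|-e|_Y=0$, so $\hat{S}_Y\neq S_Y$, and Proposition \ref{ok1} yields that $Y$ is not a $T_1$ space.

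The only place where finite dimensionality is genuinely used is the compactness of the unit sphere in the last step, and I expect this to be the crux: it is precisely what lets one pass from the approximating sequence to an actual witness $e$ of the failure of $T_1$. In infinite dimensions the sphere need not be compact and the implication can fail, which is consistent with the announced existence of type II spaces.
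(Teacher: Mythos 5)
Your proof is correct and follows essentially the same route as the paper's: both directions rest on extracting a $\|\cdot\|_s$-convergent subsequence from the sequence witnessing $c(Y)=0$ on the compact unit sphere and passing to the limit (your explicit $1$-Lipschitz estimate for $\|\cdot|_Y$ is exactly the triangle-inequality computation the paper performs inline). The only cosmetic difference is that you obtain $\|e|_Y=1$ directly by continuity, whereas the paper deduces it from $\|e\|_s=1$ together with $\|-e|_Y=0$.
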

\begin{proof}
Recall from Proposition \ref{ok1} that  $Y$ is not a $T_1$ space if and only if there exists $e\in Y$ such that $\|e|_Y=1$ and $\|-e|_Y=0$. The "if" part is clear. Let us prove the "only if" part.  Suppose that $c(Y)=0$, thus there exists a sequence $(y_n)\subset Y$ such that  $\|y_n|_Y=1$ for all $n\in \N$ and $\|-y_n|_Y\to 0$. We can assume without loss of generality that $\|-y_n|_Y <1$ for all $n\in \N$ so that we have $\|y_n\|_s=\|y_n|_Y=1$ for all $n\in \N$. Since $(Y,\|\cdot\|_s)$ is of finite dimensional, there exists a subsequence $(y_{n_k})$ converging for, the associated  norm $\|\cdot\|_s$, to some $e\in Y$ such that $\|e\|_s=1$. We show that $\|-e|_Y=0$. Indeed, 
$$\|-e|_Y\leq \|y_{n_k}-e|_Y+\|-y_{n_k}|_Y\leq \|y_{n_k}-e\|_s+\|-y_{n_k}|_Y \to 0.$$
Thus, $\|-e|_Y=0$ and $\|e|_Y=\|e\|_s=1$. Hence, $Y$ is not a $T_1$ space.
\end{proof}
 Thus, we obtain in the following result a complete characterization so that $ L_c(X,Y)$ is a vector space, in the case where $Y$ is of finite dimensional.  
\begin{corollary} \label{dimf}  Let $(X,\|\cdot|_X)$  be an asymmetric normed space and $(Y,\|\cdot|_Y)$ be an asymmetric normed space of finite dimensional. Then, $ L_c(X,Y)$ is not a vector space if and only if $c(X)=c(Y)=0$. The "only if part" is always true ,even if $Y$ is of infinite dimensional. 
\end{corollary}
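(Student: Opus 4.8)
The plan is to obtain the corollary by assembling three already-available ingredients: the first main result Theorem \ref{Main}, the finite-dimensional equivalence Lemma \ref{T1}, and the two-sided estimate (\ref{eq1}) from Proposition \ref{tex}. The genuinely hard part---the Baire category construction producing an operator $T$ with $-T\notin L_c(X,Y)$---is already contained in Theorem \ref{Main}; here the task is only to check that its hypotheses hold in the finite-dimensional setting and to treat separately the factor $c(Y)$.

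I would first handle the "only if" direction, which needs no dimension hypothesis. Assuming $L_c(X,Y)$ is not a vector space, the implication $(iii)\Longrightarrow(i)$ of Theorem \ref{Main}, applied with this very $Y$, immediately yields $c(X)=0$. To get $c(Y)=0$ I argue by contraposition: if $c(Y)>0$, then (\ref{eq1}) applied in $Y$ gives $\|-y|_Y\le \frac{1}{c(Y)}\|y|_Y$ for every $y\in Y$, so for any $T\in L_c(X,Y)$ and any $x\in X$,
$$\|-T(x)|_Y\le \frac{1}{c(Y)}\|T(x)|_Y\le \frac{1}{c(Y)}\|T|_{L_c}\,\|x|_X,$$
forcing $-T\in L_c(X,Y)$ and hence making $L_c(X,Y)$ a vector space, contrary to assumption. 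This shows $c(Y)=0$ regardless of $\dim Y$, which is exactly the claim that the "only if" part holds even in infinite dimension.

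For the "if" direction I would use finite-dimensionality twice. Suppose $\dim Y<\infty$ and $c(X)=c(Y)=0$. Since $c(Y)=0$ and $Y$ is finite dimensional, Lemma \ref{T1} gives that $Y$ is not $T_1$, i.e. there is $e\in Y$ with $\|e|_Y=1$ and $\|-e|_Y=0$. Since $(Y,\|\cdot\|_s)$ is finite dimensional it is complete, so $Y$ is a biBanach space. Thus $c(X)=0$ together with "$Y$ biBanach and not $T_1$" places us exactly in the hypotheses of Theorem \ref{Main}$(ii)$, and the implication $(i)\Longrightarrow(ii)$ delivers that $L_c(X,Y)$ is not a vector space.

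The step I expect to carry the real conceptual content---beyond invoking Theorem \ref{Main}---is recognizing that the two finite-dimensional hypotheses are precisely what make Theorem \ref{Main} applicable: Lemma \ref{T1} converts the numerical condition $c(Y)=0$ into the topological "not $T_1$" (equivalently, the existence of $e$), while finite-dimensionality supplies completeness of $(Y,\|\cdot\|_s)$, i.e. the biBanach assumption. Once these are verified, no additional Baire-type argument is required, and the only routine verification is the boundedness estimate displayed above in the "only if" direction.
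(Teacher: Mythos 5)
Your proof is correct, and its overall architecture coincides with the paper's: the ``if'' direction (use Lemma \ref{T1} to convert $c(Y)=0$ into the existence of $e$ with $\|e|_Y=1$, $\|-e|_Y=0$, note that finite dimensionality makes $Y$ biBanach, then invoke Theorem \ref{Main} $(i)\Longrightarrow(ii)$) and the derivation of $c(X)=0$ from $(iii)\Longrightarrow(i)$ are exactly what the paper does. The one step where you genuinely diverge is the proof that $c(Y)=0$ in the ``only if'' direction. The paper argues directly and constructively: it reuses the witnesses $x_n$ with $\|-T(x_n)|_Y>n\|x_n|_X$, checks that $\|-T(x_n)|_Y\neq 0$, and exhibits the unit vectors $y_n:=-T(x_n)/\|-T(x_n)|_Y$ satisfying $\|-y_n|_Y<\|T|_{L_c}/n\to 0$. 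You instead argue by contraposition: if $c(Y)>0$, formula (\ref{eq1}) applied in $Y$ gives $\|-T(x)|_Y\le\frac{1}{c(Y)}\|T(x)|_Y\le\frac{1}{c(Y)}\|T|_{L_c}\|x|_X$, so the convex cone $L_c(X,Y)$ is stable under $T\mapsto -T$ and is therefore a vector space, a contradiction. Both arguments are short and valid; yours isolates the slightly more general observation that a positive index of symmetry of the \emph{target} space alone already forces $L_c(X,Y)$ to be a vector space (a companion to the paper's Proposition \ref{tex}, which records the analogous fact for the source space), while the paper's version has the advantage of explicitly producing the degenerating sequence in $Y$.
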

\begin{proof}  To see the "only if" part,  we follow the proof of part $(iii) \Longrightarrow (i)$ of Theorem \ref{Main}. Indeed,  suppose that $L_c(X,Y)$ is not a vector space. Then, there exists $T\in L_c(X,Y) \setminus \lbrace 0 \rbrace$, such that $-T\not \in L_c(X,Y)$. Thus, for each $n\in \N$, there exists $x_n\in X$ such that $\|-T(x_n)|_Y> n\|x_n|_X$. It follows that, for all $n\in \N$ 
$$n\|x_n|_X<\|-T(x_n)|_Y=\|T(-x_n)|_Y \leq \|T|_{L_c}\|-x_n|_X.$$
 Let us set $z_n=\frac{-x_n}{\|-x_n|_X}$ (since $\|-x_n|_X\neq 0$) for all $n\in \N$. Then,  for all $n\in \N\setminus \lbrace 0\rbrace$, we have $\|z_n|_X=1$, $\|-z_n|=\frac{\|x_n|}{\|-x_n|}< \frac{\|T|_{L_c}}{n}\to 0$. Hence, $c(X)=0$. It remains to shows that $c(Y)=0$. Indeed, since $T\in L_c(X,Y) \setminus \lbrace 0 \rbrace$, then $\|T(x_n)|_Y\leq \|T|_{L_c}\|x_n|_X$. Thus, using the above inequality, we get that 
$$ \|T(x_n)|_Y < \frac{\|T|_{L_c}}{n}\|-T(x_n)|_Y.$$
This implies in particular that $\|-T(x_n)|_Y\neq 0$ for all $n\in \N \setminus \lbrace 0\rbrace$. Let us set $y_n:=\frac{-T(x_n)}{\|-T(x_n)|_Y}\in Y$. Then we have that $\|y_n|_Y=1$ for all $n\in \N \setminus \lbrace 0\rbrace$ and $\|-y_n|_Y < \frac{\|T|_{L_c}}{n}\to 0$. Hence $c(Y)=0$.

The "if" part will follow from Theorem \ref{Main} provided that the condition $c(Y)=0$ implies that there exists $e\in Y$ such that $\|e|_Y=1$ and $\|-e|_Y=0$. This is true since $Y$ is of finite dimensional (necessarilly a biBanach space), by Lemma \ref{T1}.
\end{proof}
\begin{remark} \label{open}
Combining Theorem \ref{Main} and  the "only if part" of Corollary \ref{dimf} we have that for every asymmetric normed spaces $(X,\|\cdot|_X)$ and $(Y,\|\cdot|_Y)$ (with $Y$ biBanach): $$ c(X)=0 \textnormal{ and } Y \textnormal{ is not } T_1 \Longrightarrow L_c(X,Y) \neq - L_c(X,Y) \Longrightarrow c(X)=c(Y)=0.$$
By Corollary \ref{dimf} and Lemma \ref{T1},  the reverse implications are also true  when the space $Y$ is of finite dimensional. We show in the following two examples that the reverse implications are in general false when $Y$ is of infinite dimensional. The authors are grateful to the referee for having communicated to them the following example \ref{referee} (i).
\end{remark}

\begin{Exemp} \label{referee} $(i)$ Let $X = \R^2$ and let the asymmetric norm $$\|(x_1,x_2)|_X=\max \lbrace |x_1|,x_2^+\rbrace,$$ where $x_2^+=\max \lbrace y_1, 0 \rbrace$. Clearly $c(X)=0$. Let $(Y,\|\cdot|_Y)$ be any $T_1$ space with $c(Y)=0$ (such space exists, see for example the  biBanach space $(Y,\|\cdot|_Y)=( l^{\infty}(\N^*),\|\cdot|_{\infty})$ in Example \ref{ex} below). Then, we have that $c(X)=c(Y)=0$ but $L_c(X,Y)$ is a vector space. Indeed, let $T\in L_c(X,Y)$. There exists $e_1,e_2\in Y$ such that $T(x_1,x_2)=x_1e_1+x_2e_2$ for all $(x_1,x_2)\in X$. Thus,
$$\|T(0,-1)|_Y=\|-e_2|_Y \leq \|T|_{L_c}\|(0,-1)|_X=0.$$
Since $Y$ is a $T_1$ space, we have that $e_2=0$. Thus,  $T(x_1,x_2)=x_1e_1$ for all $(x_1,x_2)\in X$. So, we have that for all $(x_1,x_2)\in X$
$$\|-T(x_1,x_2)|_Y=\|-x_1e_1|_Y \leq |x_1|\|e_1\|_s\leq \|e_1\|_s\|(x_1,x_2)|_X.$$
This shows that $-T\in L_c(X,Y)$. Hence, $L_c(X,Y)$ is a vector space.

$(ii)$ Let $(X,\|\cdot|_X)$  be an asymmetric normed space such that $X$ is a $T_1$ space with $c(X)=0$ (for example $(X,\|\cdot|_X)=( l^{\infty}(\N^*),\|\cdot|_{\infty})$ in Example \ref{ex} below). Then, the identity map $I\in  L_c(X,X)$ but $-I\not \in  L_c(X,X)$. Indeed, since $c(X)=0$ and $X$ is a $T_1$ space, then there exists a sequence $(x_n)\subset X$ such that $\|x_n|_X=1$ for all $n\in \N$ and $0<\|-x_n|_X\to 0$. Let $e_n:=\frac{-x_n}{\|-x_n|_Y}$, so $\frac{\|-I(e_n)|_Y}{\|e_n|_Y}=\frac{1}{\|-x_n|_Y}\to +\infty$. Thus, $L_c(X,X)$ is not a vector space, $c(X)=0$ but $X$ is a $T_1$ space.
\end{Exemp}

\vskip1mm
In the following result we give a density result for the asymmetric norm $\|\cdot|_{L_c}$.
\begin{corollary} \label{dense} Let $(X,\|\cdot|_X)$  be asymmetric normed space with $c(X)=0$ and  $(Y,\|\cdot|_Y)$ be a biBanach asymmetric normed space for which there exists $e\in Y$ such that $\|e|_Y=1$ and $\|-e|_Y=0$. Then, the set of elements $H\in L_c(X,Y)$ such that $-H\not \in L_c(X,Y)$ is dense in $L_c(X,Y)$ for the asymmetric norm $\|\cdot|_{L_c}$.
\end{corollary}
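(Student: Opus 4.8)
The plan is to deduce this directly from Theorem \ref{Main}, of which it is essentially a pointwise (rather than asymptotic) reformulation; no new Baire-category argument is needed, since all the genuine work is already packaged in the construction of $\mathcal{G}(H)$. First I would unwind what density for the asymmetric norm $\|\cdot|_{L_c}$ means: the forward balls $\lbrace H\in L_c(X,Y):\|H-H_0|_{L_c}<\varepsilon\rbrace$ form a base of neighborhoods, so it suffices to show that for every $H_0\in L_c(X,Y)$ and every $\varepsilon>0$ there exists $H\in L_c(X,Y)$ with $-H\notin L_c(X,Y)$ and $\|H-H_0|_{L_c}<\varepsilon$.

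So I would fix such $H_0$ and $\varepsilon$. Since $c(X)=0$, and since $Y$ is biBanach and, by the existence of $e$ with $\|e|_Y=1$ and $\|-e|_Y=0$, not a $T_1$ space (Proposition \ref{ok1}), the implication $(i)\Longrightarrow(ii)$ of Theorem \ref{Main} applies with the operator $H_0$ and radius $r=\varepsilon/2$. It yields that
$$\mathcal{G}(H_0)=\lbrace T\in\overline{B}_{L_c}(0,\varepsilon/2):-(H_0+T)\notin L_c(X,Y)\rbrace$$
is a $G_\delta$ dense, hence in particular nonempty, subset of $(\overline{B}_{L_c}(0,\varepsilon/2),\|\cdot\|_{L_s})$. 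Then I would simply pick any $T\in\mathcal{G}(H_0)$ and set $H:=H_0+T$. Because $L_c(X,Y)$ is a convex cone and $T\in\overline{B}_{L_c}(0,\varepsilon/2)\subset L_c(X,Y)$, the sum $H$ again lies in $L_c(X,Y)$; by the very definition of $\mathcal{G}(H_0)$ we have $-H=-(H_0+T)\notin L_c(X,Y)$; and $\|H-H_0|_{L_c}=\|T|_{L_c}\le\varepsilon/2<\varepsilon$. As $H_0$ and $\varepsilon$ were arbitrary, this exhibits an element of the target set in every forward ball, which is exactly the asserted density.

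The only point requiring any care, and the main (minor) obstacle, is that the density claimed here is with respect to the asymmetric norm $\|\cdot|_{L_c}$, whereas Theorem \ref{Main} delivers its $G_\delta$ dense set relative to the symmetric norm $\|\cdot\|_{L_s}$. This mismatch is harmless because I invoke only the non-emptiness of $\mathcal{G}(H_0)$, not its topological size, and the closeness I actually need is supplied for free by the constraint $\|T|_{L_c}\le\varepsilon/2$ built into $\overline{B}_{L_c}(0,\varepsilon/2)$; the identity $\|H-H_0|_{L_c}=\|T|_{L_c}$ then provides control in precisely the asymmetric norm at issue, so no further estimate is required.
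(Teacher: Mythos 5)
Your proposal is correct and follows essentially the same route as the paper: both invoke Theorem \ref{Main} with a small radius (the paper uses $r=\varepsilon$, you use $r=\varepsilon/2$) solely to extract a single $T\in\mathcal{G}(H_0)$, and then observe that $H_0+T$ lies in the cone, $-(H_0+T)$ does not, and $\|(H_0+T)-H_0|_{L_c}=\|T|_{L_c}\le r$. Your explicit remark that only non-emptiness of $\mathcal{G}(H_0)$ is needed, so the mismatch between the $\|\cdot\|_{L_s}$-density of Theorem \ref{Main} and the $\|\cdot|_{L_c}$-density being proved is harmless, is exactly the point the paper leaves implicit.
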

\begin{proof} Using Theorem \ref{Main} with $r=\varepsilon$ for every $\varepsilon>0$,  we get that for every $H\in L_c(X,Y)$, there exists $T\in \overline{B}_{L_c}(0,\varepsilon)$, such that $-(H+T)\not \in L_c(X,Y)$, however $H+T \in L_c(X,Y)$. 
\end{proof}
\begin{Exemp} \label{ex}   Let $X= l^{\infty}(\N^*)$ equipped with the asymmetric norm $\|\cdot|_{\infty}$ defined by 
$$\|x|_{\infty}=\sup_{n\in \N^*}\|x_n|_{\frac 1 n}\leq \|x\|_{\infty},$$
where for each $t\in \R$ and each $n\in \N^*$, $\|t|_{\frac 1 n}=t$  if $t\geq 0$ and $\|t|_{\frac 1 n}=-\frac{t}{n}$ if $t\leq 0$. Then, clearly $\hat{S}_X= S_X$ since $\|x|_{\infty}=0 \Longleftrightarrow \|-x|_{\infty}=0 \Longleftrightarrow x=0$. Thus, $(X,\|\cdot|_{\infty})$ is a $T_1$ asymmetric normed space. On the other hand, for each $n\in \N^*$, we have $\|e_n|_{\infty}=1$ and $\|-e_n|_{\infty}=\frac 1 n$, where $(e_n)$ is the canonical basis of $c_0(\N^*)$. It follows that $c(l^{\infty}(\N^*))=0$ and so, the set 
$$\lbrace p\in (l^{\infty}(\N^*))^{\flat} \textnormal{ such that } -p \not \in  (l^{\infty}(\N^*))^{\flat} \rbrace$$ is dense in  $(l^{\infty}(\N^*))^{\flat}$ for the asymmetric norm $\|\cdot|_{\flat}$ (see Corollary \ref{dense}).
\end{Exemp}
\vskip5mm
Using Theorem \ref{Main} and Proposition \ref{tex}, we give in the following corollary a complete characterization for the convex cone $L_c(X,Y)$ to be a vector space. The non trivial part of the following corollary is the implication $(v) \Longrightarrow (i)$, which is a consequence of Theorem ~\ref{Main}. Note also that thanks to Proposition ~\ref{tex}, we do not need to assume that $Y$ is biBanach space (for which there exists $e\in Y$ such that $\|e|_Y=1$ and $\|-e|_Y=0$) in the following corollary, since this condition  used in Theorem ~\ref{Main} is implicitly verified for the space $(\R,\|\cdot|_{\R})$ in part $(v)$. 

\begin{corollary} \label{Car} Let $(X,\|\cdot|_X)$  be an asymmetric normed space. Then,  the following assertions are equivalent.

\begin{enumerate}[$(i)$]
    \item $c(X)>0$.
    \item $(X,\|\cdot|_X)$ is isomorphic to its associated normed space.
    \item For every asymmetric normed space $(Y,\|\cdot|_Y)$, we have that $L_c(X,Y)$ is an asymmetric normed space isomorphic to the space $L(X_s,Y_s)$.
    \item $(X^{\flat},\|\cdot|_{\flat})$ is an asymmetric normed space isomorphic to the Banach space $(X^*,\|\cdot\|_{*})$.
    \item $X^{\flat}$ is a vector space.
\end{enumerate}

\end{corollary}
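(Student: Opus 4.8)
The plan is to prove Corollary \ref{Car} by establishing a cycle of implications among the five assertions, leaning on the results already proved in the excerpt. The most direct route is $(i) \Longrightarrow (ii) \Longrightarrow (iii) \Longrightarrow (iv) \Longrightarrow (v) \Longrightarrow (i)$, since Proposition \ref{tex} already packages almost everything needed for the forward implications, and Theorem \ref{Main} supplies the one nontrivial closing step.

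First I would handle $(i) \Longrightarrow (ii)$ and $(i) \Longrightarrow (iii)$ directly from Proposition \ref{tex}: inequality (\ref{eq2}) gives $c(X)\|x\|_s \leq \|x|_X \leq \|x\|_s$, so the identity map realizes the isomorphism $(X,\|\cdot|_X)\simeq(X,\|\cdot\|_s)$, and inequality (\ref{eq3}) together with the concluding sentence of that proposition gives $(L_c(X,Y),\|\cdot|_{L_c})\simeq (L(X_s,Y_s),\|\cdot\|_{L_s})$ for every $Y$, which is exactly $(iii)$. For $(iii) \Longrightarrow (iv)$ I would simply specialize to $Y=(\R,\|\cdot|_{\R})$, observing that $L_c(X,\R)=X^{\flat}$, $L(X_s,\R_s)=X^*$, and $\|\cdot\|_{L_s}$ reduces to the dual norm $\|\cdot\|_*$, so assertion $(iii)$ for this particular $Y$ is verbatim assertion $(iv)$. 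The implication $(iv) \Longrightarrow (v)$ is immediate, since an asymmetric normed space that is isomorphic (in particular, as a linear space) to the vector space $X^*$ is itself a vector space.

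The substantive step is $(v) \Longrightarrow (i)$, and this is where Theorem \ref{Main} does the work. I would argue by contraposition: assume $c(X)=0$ and show $X^{\flat}$ is not a vector space. Here I must be careful, because Theorem \ref{Main} as stated requires a biBanach $Y$ that fails $T_1$, whereas $(v)$ concerns only $X^{\flat}=L_c(X,\R)$. The key observation — already flagged in the remark preceding the corollary — is that $(\R,\|\cdot|_{\R})$ is precisely such a space: it is complete, hence biBanach, and taking $e=1$ gives $\|1|_{\R}=\max\{0,1\}=1$ while $\|-1|_{\R}=\max\{0,-1\}=0$, so $\R$ is not $T_1$. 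Therefore the implication $(i)\Longrightarrow(ii)$ of Theorem \ref{Main}, applied with $Y=(\R,\|\cdot|_{\R})$, yields directly that $L_c(X,\R)=X^{\flat}$ is not a vector space, contradicting $(v)$.

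The main obstacle I anticipate is purely bookkeeping rather than conceptual: namely verifying that the specialization $Y=\R$ legitimately meets every hypothesis of Theorem \ref{Main} (completeness, the existence of the asymmetric unit vector $e$ with $\|e|_{\R}=1$, $\|-e|_{\R}=0$), and confirming the identifications $X^{\flat}=L_c(X,\R)$, $X^*=L(X_s,\R_s)$, and $\|\cdot\|_*=\|\cdot\|_{L_s}$ so that assertions $(iii)$ and $(iv)$ genuinely coincide on this instance. Once these identifications are in place, the cycle closes and all five assertions are equivalent; I would remark that the contrapositive form of $(v)\Longrightarrow(i)$ is really the heart of the matter, as the forward implications are all routine consequences of Proposition \ref{tex}.
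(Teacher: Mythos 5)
Your route is the same as the paper's: the forward implications are read off from Proposition \ref{tex} (inequalities (\ref{eq2}) and (\ref{eq3})), the step $(iii)\Rightarrow(iv)$ is the specialization $Y=(\R,\|\cdot|_{\R})$ under the identifications $X^{\flat}=L_c(X,\R)$, $X^*=L(X_s,\R_s)$, $\|\cdot\|_{L_s}=\|\cdot\|_*$, and the only substantive step $(v)\Rightarrow(i)$ is obtained by contraposition from Theorem \ref{Main} applied to $Y=(\R,\|\cdot|_{\R})$, which is indeed biBanach and fails $T_1$ since $e=1$ satisfies $\|1|_{\R}=1$ and $\|-1|_{\R}=0$. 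All of this is correct and is exactly the argument the authors intend.

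There is, however, one structural gap. You announce the cycle $(i)\Rightarrow(ii)\Rightarrow(iii)\Rightarrow(iv)\Rightarrow(v)\Rightarrow(i)$, but you then actually prove $(i)\Rightarrow(ii)$ and $(i)\Rightarrow(iii)$ separately, so assertion $(ii)$ becomes a dead end: nothing is ever deduced from it, and its equivalence with the other four assertions is established in one direction only. Note that $(ii)\Rightarrow(iii)$ is not available for free from Proposition \ref{tex}, since that proposition assumes $c(X)>0$, i.e.\ assertion $(i)$. The quickest repair is to prove $(ii)\Rightarrow(i)$ directly: if $T:(X,\|\cdot|_X)\to(X,\|\cdot\|_s)$ is a linear bijection with $\|Tx\|_s\leq C\|x|_X$ and $\|T^{-1}y|_X\leq C'\|y\|_s$ for all $x,y\in X$, then for every $x$ with $\|x|_X=1$ one has
$$1=\|T^{-1}(Tx)|_X\leq C'\|Tx\|_s=C'\|T(-x)\|_s\leq CC'\|-x|_X,$$
whence $c(X)\geq 1/(CC')>0$. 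With this one line added, the cycle closes and your proof is complete.
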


The following result shows that if an asymmetric normed space $X$ is a dual of some asymmetric normed space, then necessarily it is isomorphic to its associated normed space, in other words $c(X)>0$.  
\begin{corollary} Let $(X,\|\cdot|)$ be an asymmetric normed space and suppose that $c(X)=0$. Then, $X$ can not be the dual of an asymmetric normed space. The converse is false in general (ex. the Banach space $X=(c_0(\N),\|\cdot\|_{\infty})$,  is not a dual space but  $c(X)=1$). 
\end{corollary}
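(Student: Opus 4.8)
The plan is to argue by contradiction, reducing the claim to the five equivalences of Corollary \ref{Car} together with one short computation that exploits the symmetry of a genuine norm. Suppose $(X,\|\cdot|)$ satisfies $c(X)=0$ and is nonetheless the dual of some asymmetric normed space $Z$, i.e.\ there is an isomorphism $T\colon X \to Z^{\flat}$. First I would observe that, $T$ being a bijective linear map and $X$ a vector space, its range $Z^{\flat}=T(X)$ is a linear subspace of $Z^{*}$; hence $Z^{\flat}$ is itself a vector space. Applying the equivalence $(v)\Longrightarrow(iv)$ of Corollary \ref{Car} to the space $Z$, this forces $(Z^{\flat},\|\cdot|_{\flat})$ to be isomorphic to the Banach space $(Z^{*},\|\cdot\|_{*})$. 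Composing the two isomorphisms gives $X \simeq (Z^{*},\|\cdot\|_{*})$, so that $X$ is isomorphic to a classical normed space.

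The heart of the argument is then the assertion that an asymmetric normed space isomorphic to a normed space must have positive index. Concretely, if $S\colon X\to (W,\|\cdot\|)$ is an isomorphism onto a normed space, with constants $C,C'>0$ such that $\|S(x)\|\le C\|x|$ and $\|S^{-1}(w)|\le C'\|w\|$ for all $x,w$, then from $\|x|=\|S^{-1}(S(x))|\le C'\|S(x)\|$ we get $\|S(x)\|\ge \tfrac1{C'}\|x|$, and, crucially invoking the symmetry $\|S(-x)\|=\|-S(x)\|=\|S(x)\|$,
\[
\|-x| \;\ge\; \frac1{C}\,\|S(-x)\| \;=\; \frac1{C}\,\|S(x)\| \;\ge\; \frac1{CC'}\,\|x|.
\]
Taking the infimum over $\|x|=1$ yields $c(X)\ge \tfrac1{CC'}>0$, contradicting $c(X)=0$. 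This contradiction shows that $X$ cannot be a dual, which proves the first assertion. The step I expect to be the genuine obstacle is precisely this use of symmetry: it is the symmetry of the \emph{target} norm that converts the two-sided equivalence of $\|\cdot\|$ with $\|\cdot|$ into a two-sided bound of $\|-x|$ by $\|x|$, and nothing in the asymmetric data of $X$ alone would supply it.

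For the converse, I would take $X=(c_0(\N),\|\cdot\|_{\infty})$, a normed space with $c(X)=1>0$, and show it is not a dual. Indeed, by the reduction above, if $c_0(\N)$ were isomorphic to some $Z^{\flat}$, then $Z^{\flat}$ would be a vector space, whence Corollary \ref{Car} gives $c_0(\N)\simeq Z^{\flat}\simeq (Z^{*},\|\cdot\|_{*})$, exhibiting $c_0(\N)$ as isomorphic to a dual Banach space. This contradicts the classical fact that $c_0(\N)$ is not isomorphic to any dual Banach space, so $c_0(\N)$ is not the dual of an asymmetric normed space even though $c(c_0(\N))>0$. Hence the converse of the first assertion fails.
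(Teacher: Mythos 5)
Your argument is correct, and it reaches the contradiction by a genuinely different decomposition than the paper's. The paper works on the predual side: assuming $(X,\|\cdot|)=(Y^{\flat},\|\cdot|_{\flat})$, it first rules out $c(Y)>0$ via formula (\ref{eq21}) of Proposition \ref{tex} (which would force $c(X)\geq c(Y)>0$), and then invokes Theorem \ref{Main} with $c(Y)=0$ to conclude that $Y^{\flat}$ is not a vector space, contradicting the fact that $X$ is one. You instead work on the side of $X$ itself: from the observation that $Z^{\flat}=T(X)$ is a vector space you pass through Corollary \ref{Car} to get $X\simeq Z^{\flat}\simeq (Z^{*},\|\cdot\|_{*})$, and then close the loop with the elementary computation that an asymmetric normed space isomorphic to a genuine normed space has index at least $\frac{1}{CC'}>0$ --- the symmetry of the target norm being exactly what converts the two-sided equivalence of $\|S(\cdot)\|$ with $\|\cdot|$ into a lower bound on $\|-x|$ in terms of $\|x|$. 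The two routes rest on the same underlying machinery (Theorem \ref{Main} is what powers Corollary \ref{Car}), but yours buys two things: it covers the a priori weaker hypothesis that $X$ is merely \emph{isomorphic} to a dual, whereas the paper's proof is written for the equality $X=Y^{\flat}$; and your final step is a short self-contained estimate rather than a second appeal to the main theorem. You also supply an actual justification of the parenthetical claim about $c_0(\N)$, reducing it to the classical fact that $c_0$ is not isomorphic to any dual Banach space, which the paper merely asserts without proof.
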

\begin{proof} Suppose that there exists an asymmetric normed space $Y$ such that $(Y^{\flat},\|\cdot|_{\flat})=(X,\|\cdot|)$. We prove that  $c(Y)=0$.  Indeed, suppose by contradiction that $c(Y)>0$, then by formula (\ref{eq21}) of Proposition \ref{tex} (applied with the spaces $(Y,\|\cdot|_Y)$ and $(\R,\|\cdot|_{\R})$), we have that 
\begin{eqnarray*} 
c(Y)\|p\|_{\flat}\leq \|-p|_{\flat}\leq \frac{1}{c(Y)} \|p\|_{\flat}, \hspace{2mm} \forall p\in Y^{\flat}=X.
\end{eqnarray*}
This implies that $c(X)\geq c(Y)>0$, which contradict the fact that $c(X)=0$. Hence, $c(Y)=0$. Now,  using Theorem \ref{Main}, we get that $Y^{\flat}$ is not a vector space which contradict the fact that $X=Y^{\flat}$ is a vector space. Finally, $X$ cannot be the dual of an asymmetric normed space.
\end{proof}
\begin{corollary} Let $X$ be an asymmetric normed space and $(Y,\|\cdot|_Y)$ be a biBanach asymmetric normed space for which there exists $e\in Y$ such that $\|e|_Y=1$ and $\|-e|_Y=0$. Then, either $$L_c(X,Y)\cap (-L_c(X,Y))=L_c(X,Y)\simeq L(X_s,Y_s)$$ or  $L_c(X,Y)\cap (-L_c(X,Y))$ is of first Baire category in $(L(X_s,Y_s),\|\cdot\|_{L_s})$.
\end{corollary}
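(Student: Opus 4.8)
The plan is to dichotomize on the index $c(X)$, the two cases matching the two alternatives exactly. If $c(X)>0$, there is nothing to do: by Corollary \ref{Car} (equivalently Proposition \ref{tex}) the cone $(L_c(X,Y),\|\cdot|_{L_c})$ is an asymmetric normed space isomorphic to $(L(X_s,Y_s),\|\cdot\|_{L_s})$, and being an asymmetric normed space it is in particular a vector space, so $L_c(X,Y)=-L_c(X,Y)$ and hence $L_c(X,Y)\cap(-L_c(X,Y))=L_c(X,Y)\simeq L(X_s,Y_s)$, which is the first alternative.

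The substance is the case $c(X)=0$, where I must show that $V:=L_c(X,Y)\cap(-L_c(X,Y))$ is of first Baire category in $(L(X_s,Y_s),\|\cdot\|_{L_s})$. First I would introduce the closed symmetric sets $W_n:=\overline{B}_{L_c}(0,n)\cap(-\overline{B}_{L_c}(0,n))$: by Lemma \ref{lema} each $\overline{B}_{L_c}(0,n)$ is closed in $L(X_s,Y_s)$, and $T\mapsto -T$ is a $\|\cdot\|_{L_s}$-isometry, so each $W_n$ is closed. Positive homogeneity of $\|\cdot|_{L_c}$ gives $W_n=nW_1$, and since every $T\in V$ has $\|T|_{L_c}$ and $\|-T|_{L_c}$ both finite, one has $V=\bigcup_{n\in\N}W_n$. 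Hence it suffices to prove that the closed set $W_1$ is nowhere dense, i.e. has empty $\|\cdot\|_{L_s}$-interior; then each $W_n=nW_1$ is nowhere dense and $V$ is a countable union of nowhere dense sets.

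To prove $W_1$ has empty interior I would argue by contradiction. Suppose some ball $\{S:\|S-T_0\|_{L_s}<\delta\}$ lies in $W_1$. Then for every $S$ with $\|S\|_{L_s}<\delta$ both $T_0+S$ and $T_0-S$ belong to $W_1$. Writing out the two defining inequalities of $W_1$ for these operators and using subadditivity of $\|\cdot|_{L_c}$ — adding $\|T_0+S|_{L_c}\le1$ to $\|-T_0+S|_{L_c}\le1$ to control $\|2S|_{L_c}$, and adding $\|-T_0-S|_{L_c}\le1$ to $\|T_0-S|_{L_c}\le1$ to control $\|-2S|_{L_c}$ — yields $\|S|_{L_c}\le1$ and $\|-S|_{L_c}\le1$ whenever $\|S\|_{L_s}<\delta$. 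By homogeneity this upgrades to $\|S|_{L_c}\le\delta^{-1}\|S\|_{L_s}$ for all $S\in L(X_s,Y_s)$, so every $S\in L(X_s,Y_s)$ is $\|\cdot|_{L_c}$-bounded, i.e. $L_c(X,Y)=L(X_s,Y_s)$ is a vector space. Since $(Y,\|\cdot|_Y)$ is biBanach and not $T_1$ (it admits $e$ with $\|e|_Y=1$, $\|-e|_Y=0$), this contradicts the implication $(i)\Rightarrow(ii)$ of Theorem \ref{Main}, which forces $L_c(X,Y)$ to fail to be a vector space when $c(X)=0$. Thus $W_1$ is nowhere dense and $V$ is meagre.

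The delicate point — and the reason the Baire statement does not simply read off from Theorem \ref{Main} — is precisely the transfer from ``meagre inside $\overline{B}_{L_c}(0,n)$'' to ``meagre in the ambient Banach space $L(X_s,Y_s)$''. Theorem \ref{Main} only provides that the operators $T$ with $-T\in L_c(X,Y)$ form a meagre subset of the complete metric space $(\overline{B}_{L_c}(0,n),\|\cdot\|_{L_s})$, whereas $\overline{B}_{L_c}(0,n)$ may itself have nonempty interior in $L(X_s,Y_s)$ (this already happens for $X=Y=(\R,\|\cdot|_{\R})$), so its subsets need not be nowhere dense in the ambient space. The empty-interior computation for $W_1$ above is exactly what bridges this gap; as an alternative one could instead invoke the Pettis open-subgroup dichotomy for the $F_\sigma$ (hence Baire-property) subspace $V$, but the direct homogeneity argument keeps the proof self-contained.
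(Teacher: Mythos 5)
Your proof is correct, but it follows a genuinely different route from the paper's in the case $c(X)=0$. The paper simply writes
$$L_c(X,Y)\cap(-L_c(X,Y))=\bigcup_{n\in\N} n\bigl(\overline{B}_{L_c}(0,1)\setminus\mathcal{G}(0)\bigr)$$
and observes that each piece is of first category, because Theorem \ref{Main} (with $H=0$, $r=1$) makes $\mathcal{G}(0)$ a dense $G_\delta$ in the complete metric space $(\overline{B}_{L_c}(0,1),\|\cdot\|_{L_s})$. You instead decompose the cone as $\bigcup_n nW_1$ with $W_1=\overline{B}_{L_c}(0,1)\cap(-\overline{B}_{L_c}(0,1))$, and prove directly that the closed set $W_1$ has empty interior by a symmetrization/homogeneity argument: a ball inside $W_1$ would force $\|S|_{L_c}\lesssim\|S\|_{L_s}$ for all $S$, hence $L_c(X,Y)=L(X_s,Y_s)$, contradicting the ``not a vector space'' conclusion of Theorem \ref{Main}. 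This is a valid and self-contained alternative; it only consumes the qualitative output of Theorem \ref{Main} rather than its full Baire-category content. One remark on your closing paragraph: the ``delicate point'' you flag is not actually a gap in the paper's argument. A set that is nowhere dense in the \emph{relative} topology of a \emph{closed} subset $C$ of $L(X_s,Y_s)$ is automatically nowhere dense in the ambient space (any nonempty ambient open set inside its closure would sit inside $C$ and hence be a nonempty relatively open subset of its relative closure), so meagreness in $(\overline{B}_{L_c}(0,1),\|\cdot\|_{L_s})$ does transfer to meagreness in $(L(X_s,Y_s),\|\cdot\|_{L_s})$ once Lemma \ref{lema} gives closedness of $\overline{B}_{L_c}(0,1)$. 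Your worry would be justified only for subsets that are merely contained in $\overline{B}_{L_c}(0,1)$ without being relatively nowhere dense. So both proofs are sound; yours trades the (implicit but harmless) relativization step for an explicit empty-interior computation.
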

\begin{proof} If $c(X)>0$, then by Corollary \ref{Car}, we have that $L_c(X,Y)$ is an asymmetric normed space isomorphic to $L(X_s,Y_s)$, thus we have that $$L_c(X,Y)\cap (-L_c(X,Y))=L_c(X,Y)\simeq L(X_s,Y_s).$$
Otherwise, we have that $c(X)=0$. In this case, to see that $L_c(X,Y)\cap (-L_c(X,Y))$ is of first Baire category in the space $(L(X_s,Y_s),\|\cdot\|_{L_s})$, it suffices to observe, using Theorem \ref{Main} with $H=0$ and $r=1$, that we have
$$L_c(X,Y)\cap (-L_c(X,Y)) =\cup \lbrace n(\overline{B}_{L_c}(0,1)\setminus \mathcal{G}(0)): n\in \N \rbrace,$$
so that, it is of first Baire category in $(L(X_s,Y_s),\|\cdot\|_{L_s})$, being the countable union of first Baire category sets.
\end{proof}
\subsection{The second main result.}
Now, we give our second main result. We are interested in the following result, concerning the density of the dual $X^{\flat}$ in $X^*$. By $ \overline{X^{\flat}}^{w^*}$, we denote the weak-star closure of $X^{\flat}$ in $(X^*,w^*)$.

\begin{theorem} \label{Tain} Let $(X,\|\cdot|)$  be an asymmetric normed space. Then,  $X$ is a $T_1$ space if and only if $\overline{X^{\flat}}^{w^*}=X^*$. 
\end{theorem}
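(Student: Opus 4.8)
The plan is to pass to the duality between $X$ (viewed through its associated normed space $X_s$, which serves as the predual of $(X^*,w^*)$) and $X^*$, and to exploit that $X^{\flat}$ is a convex cone, so that its weak-star closure is governed by its polar cone. Recall that the $w^*$-continuous linear functionals on $X^*$ are exactly the evaluations $q\mapsto \langle q,x\rangle$ at points $x\in X$. Since $X^{\flat}$ is a convex cone containing $0$, the bipolar theorem for cones gives $\overline{X^{\flat}}^{w^*}=(X^{\flat})^{\circ\circ}$, where the polar cone is
$$(X^{\flat})^{\circ}:=\lbrace x\in X : \langle p,x\rangle\leq 0 \ \textnormal{for all}\ p\in X^{\flat}\rbrace.$$
Consequently $\overline{X^{\flat}}^{w^*}=X^*$ if and only if $(X^{\flat})^{\circ}=\lbrace 0\rbrace$, which reduces the theorem to computing this polar cone.

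The key step is to identify $(X^{\flat})^{\circ}=\lbrace x\in X : \|x|_X=0\rbrace$. For this I would invoke the Hahn--Banach theorem for asymmetric normed spaces (\cite[Corollary 2.2.4]{Cob}), which, exactly as used in the proof of Proposition \ref{index}, yields the duality formula $\|x|_X=\sup\lbrace \langle p,x\rangle : p\in X^{\flat},\ \|p|_{\flat}\leq 1\rbrace$ for every $x\in X$. Because $X^{\flat}$ is a cone, the quantity $\sup_{p\in X^{\flat}}\langle p,x\rangle$ takes only the values $0$ or $+\infty$: if some $p\in X^{\flat}$ had $\langle p,x\rangle>0$, then $\langle tp,x\rangle\to +\infty$ as $t\to +\infty$, while $0\in X^{\flat}$ makes the supremum nonnegative. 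Hence $x\in (X^{\flat})^{\circ}$ precisely when $\langle p,x\rangle\leq 0$ for all $p\in X^{\flat}$, and by the duality formula this is equivalent to $\|x|_X=0$, giving the claimed description of the polar cone.

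Combining the two reductions, $\overline{X^{\flat}}^{w^*}=X^*$ holds if and only if $\lbrace x\in X : \|x|_X=0\rbrace=\lbrace 0\rbrace$, that is, if and only if $\|x|_X=0\Rightarrow x=0$; by Proposition \ref{ok1} (through the characterization $\hat{S}_X=S_X$) this is exactly the statement that $X$ is a $T_1$ space, which completes the argument. I expect the main obstacle to be the careful invocation of the bipolar theorem in the weak-star setting, namely verifying that the relevant predual is $X$ and that the cone $X^{\flat}$ interacts correctly with the separation argument. If one prefers to avoid the bipolar machinery, the two implications can be argued directly: for the converse, if $\|e|_X=0$ with $e\neq 0$ then $\langle q,e\rangle\leq 0$ for all $q\in X^{\flat}$, hence for all $q$ in the $w^*$-closure, and if that closure were all of $X^*$ one would get $\langle q,e\rangle=0$ for every $q\in X^*$, forcing $e=0$ by the classical Hahn--Banach theorem; for the forward implication, separating a point $q_0\notin \overline{X^{\flat}}^{w^*}$ from the cone by some $x\in X$, the cone structure together with the duality formula forces $\|x|_X=0$, whence $x=0$ by $T_1$-ness, contradicting the strict separation.
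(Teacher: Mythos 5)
Your argument is correct, and it reorganizes the proof rather than merely reproducing it: you reduce the theorem to the computation of the polar cone $(X^{\flat})^{\circ}=\lbrace x\in X:\ \|x|_X=0\rbrace$ and then invoke the bipolar theorem for the dual pair $(X,X^*)$, whereas the paper argues both implications by hand. Concretely, the paper's forward direction separates a point $p\notin \overline{X^{\flat}}^{w^*}$ from the closed convex cone by some $x_0\neq 0$, picks $q_0\in X^{\flat}$ with $\langle q_0,x_0\rangle=\|x_0|$ via the asymmetric Hahn--Banach theorem, and lets $nq_0$ run off to infinity to force $\|x_0|=0$; this is exactly the unwinding of your bipolar step combined with your identification of the polar cone, so the two proofs rest on the same two ingredients (weak-star separation and the duality formula $\|x|_X=\sup\lbrace\langle p,x\rangle: p\in X^{\flat},\ \|p|_{\flat}\leq 1\rbrace$). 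For the converse the paper approximates a norming functional $p$ for $\|x\|_s$ by elements of $X^{\flat}$ and derives $\|x\|_s\leq\varepsilon$, while you observe more simply that a nonzero $x$ with $\|x|_X=0$ lies in $(X^{\flat})^{\circ}$ and hence obstructs density; both are fine. What your packaging buys is a cleaner conceptual statement --- $\overline{X^{\flat}}^{w^*}$ is the polar of the "kernel" $\lbrace x:\|x|_X=0\rbrace$ of the asymmetric norm, which is precisely the closure of $\lbrace 0\rbrace$ --- at the cost of citing the bipolar theorem as a black box; the paper's version is self-contained modulo the Hahn--Banach separation theorem. Two small points worth making explicit if you write this up: the equivalence $\lbrace x:\|x|_X=0\rbrace=\lbrace 0\rbrace\Leftrightarrow X$ is $T_1$ should be justified via the remark preceding Proposition \ref{ok1} together with that proposition (as you indicate), and in the identification of the polar cone you should note that $\|p|_{\flat}=0$ forces $p=0$ (since $\|p\|_*\leq\|p|_{\flat}$) so that normalizing $p$ by $\|p|_{\flat}$ is legitimate.
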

\begin{proof} Assume that $X$ is a $T_1$ space.  Suppose by contradiction that $ \overline{X^{\flat}}^{w^*}\neq X^*$ and fix $p\in X^*\setminus \overline{X^{\flat}}^{w^*}$. By the classical Hahn-Banach theorem in the Hausdorff locally convex vector space $(X^*,w^*)$, there exists $x_0\in X\setminus \lbrace 0 \rbrace$ and $\alpha \in \R$, such that 
\begin{eqnarray} \label{Hahn-Bana}
\langle p, x_0\rangle > \alpha \geq \langle q, x_0\rangle, \textnormal{ for all } q \in \overline{X^{\flat}}^{w^*}.
\end{eqnarray}
Since $X$ is $T_1$ space and $x_0\neq 0$,  we have that $\|x_0|>0$. Now, using \cite[Theorem 2.2.2]{Cob}, there exists $q_0\in X^{\flat}$ such that $\|q_0|_{\flat}=1$ and $\langle q_0, x_0\rangle=\|x_0|$. Since $X^{\flat}\subset \overline{X^{\flat}}^{w^*}$ is a convex cone, we obtain using (\ref{Hahn-Bana}) that for all $n\in \N$,
\begin{eqnarray*} 
\langle p, x_0\rangle > \alpha \geq \langle nq_0, x_0\rangle=n\|x_0|.
\end{eqnarray*}
This implies that $\|x_0|=0$ which is impossible. Hence, $ \overline{X^{\flat}}^{w^*}=X^*$.  Conversely, suppose that $ \overline{X^{\flat}}^{w^*}=X^*$. We need to show that $\|x|>0$ whenever $x\neq 0$. Indeed, let $x\neq 0$. By the Hahn-Banach theorem (in $X^*$), there exists $p\in X^*$ such that $\|p\|_{*}=1$ and $\langle p, x\rangle=\|x\|_s>0$. On the other hand, $p\in \overline{X^{\flat}}^{w^*}=X^*$, thus, for every $\varepsilon>0$, there exists $q_\varepsilon \in X^{\flat}$ such that $$\langle q_\varepsilon, x\rangle + \varepsilon \geq \langle p, x\rangle= \|x\|_s.$$
Suppose by contradiction that $\|x|=0$. It follows that for every $\varepsilon>0$, $\langle q_\varepsilon, x\rangle\leq \|q_\varepsilon|_{\flat}\|x|=0$. So using the above formula, we get that $\|x\|_s \leq \varepsilon $ for every $\varepsilon>0$ which implies that $x=0$ and gives a contradiction. Hence, $\|x|>0$ for every $x\neq 0$, which implies that $X$ is a $T_1$ space.
\end{proof}
\begin{remark} Following the same arguments as in the above proof, we get that in general $\overline{\textnormal{span}(X^{\flat})}^{w^*}=X^*$ (even if $X$ is not a $T_1$ space).
\end{remark}

We know from \cite[Theorem 4.]{GRS} (see also \cite[Proposition 2.4.2.]{Cob})  that $\overline{B}_{\flat}(0,1)$ is always weak-star closed in $(X^*,w^*)$ (in fact, weak-star compact). On the other hand, $\overline{B}_{\flat}(0,1)$ is always  norm closed in $(X^*,\|\cdot\|_{*})$ (see Lemma \ref{lema}). These results are not always true for the whole space $ X^{\flat}$, when $c(X)=0$. We have the following characterization.
\begin{corollary} \label{clos}  Let $(X,\|\cdot|)$  be a $T_1$  asymmetric normed space. Then, $X^{\flat}$ is  weak-star closed in $(X^*,w^*)$ if and only if, $c(X)>0$ if and only if $X$ is isomorphic to its associated normed space. 
\end{corollary}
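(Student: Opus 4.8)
The plan is to notice that two of the three conditions---namely $c(X)>0$ and the isomorphism of $(X,\|\cdot|_X)$ with its associated normed space---are already known to be equivalent: this is precisely the equivalence $(i)\Leftrightarrow(ii)$ of Corollary \ref{Car}. Hence the only genuinely new content is to tie these to the weak-star closedness of $X^{\flat}$. I would therefore reduce the whole statement to the single equivalence: for a $T_1$ space $X$, the convex cone $X^{\flat}$ is weak-star closed in $(X^*,w^*)$ if and only if $c(X)>0$.

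For the implication $c(X)>0 \Rightarrow X^{\flat}$ weak-star closed, I would first establish that in this case $X^{\flat}$ coincides with the whole dual $X^*$. Indeed, take $p\in X^*$; formula (\ref{eq2}) gives $\|x\|_s\leq \frac{1}{c(X)}\|x|_X$ for every $x\in X$, so that $\langle p,x\rangle \leq \|p\|_{*}\|x\|_s \leq \frac{\|p\|_{*}}{c(X)}\|x|_X$. This says exactly that $\max\{0,\langle p,x\rangle\}=\|p(x)|_{\R}\leq \frac{\|p\|_{*}}{c(X)}\|x|_X$, i.e. $p$ is bounded from $(X,\|\cdot|_X)$ to $(\R,\|\cdot|_{\R})$, hence $p\in X^{\flat}$. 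Thus $X^{\flat}=X^*$, and being the whole space it is trivially weak-star closed.

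For the converse, $X^{\flat}$ weak-star closed $\Rightarrow c(X)>0$, I would combine the closedness hypothesis with Theorem \ref{Tain}. Since $X$ is a $T_1$ space, Theorem \ref{Tain} yields $\overline{X^{\flat}}^{w^*}=X^*$; if in addition $X^{\flat}$ is already weak-star closed, then $X^{\flat}=\overline{X^{\flat}}^{w^*}=X^*$. In particular $X^{\flat}$ is a vector space, and the implication $(v)\Rightarrow(i)$ of Corollary \ref{Car} then gives $c(X)>0$.

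I do not expect a serious obstacle once Theorem \ref{Tain} and Corollary \ref{Car} are available: the core of the argument is the elementary observation that a set which is simultaneously weak-star dense and weak-star closed must be the entire space. The only point requiring genuine care is in the first implication, where one must verify explicitly via (\ref{eq2}) that $c(X)>0$ forces the \emph{set-theoretic} equality $X^{\flat}=X^*$, and not merely the equivalence of the norms $\|\cdot|_{\flat}$ and $\|\cdot\|_{*}$ on $X^{\flat}$ already recorded in Proposition \ref{tex}.
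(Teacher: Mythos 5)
Your proof is correct and follows essentially the same route as the paper: both directions rest on Theorem \ref{Tain} (density of $X^{\flat}$ in $(X^*,w^*)$ for $T_1$ spaces) together with Corollary \ref{Car}/Theorem \ref{Main}, the only cosmetic differences being that you argue the ``closed $\Rightarrow c(X)>0$'' direction directly (closed plus dense gives $X^{\flat}=X^*$, hence a vector space, hence $c(X)>0$ by $(v)\Rightarrow(i)$) where the paper argues by contraposition, and that you verify $c(X)>0\Rightarrow X^{\flat}=X^*$ explicitly from formula (\ref{eq2}) rather than quoting Corollary \ref{Car}$(iv)$.
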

\begin{proof} If $c(X)=0$, by Theorem \ref{Main} we know that $ X^{\flat}\neq -X^{\flat}$. It follows that $X^{\flat}\neq X^*$ and so by Theorem \ref{Tain}, $ X^{\flat}\neq \overline{X^{\flat}}^{w^*}=X^*$. Equivalently, $ X^{\flat}= \overline{X^{\flat}}^{w^*}$, implies that $c(X)>0$.  Conversely, $c(X)>0$ is equivalent to the fact that $X^{\flat}$ is isomorphic to $X^*$  by Corollary \ref{Car} and so it is in particular is weak-star closed in $(X^*,w^*)$. 
\end{proof}
The space  $X^{\flat}$ (where $X=(l^{\infty}(\N^*),\|x|_{\infty})$ given is Example \ref{ex}) is not weak-star closed in 
$(X^*,w^*)$.
\begin{remark} Note that, if we assume that $X^*$ is a reflexive space, then  we can replace the $w^*$-closure by the $\|\cdot\|_{*}$-closure. This follows from the well-known Mazur's theorem on the coincidence of weak and norm topologies on convex sets (see \cite{HB}), so we have that  $ \overline{X^{\flat}}^{w^*}=\overline{X^{\flat}}^{w}=\overline{X^{\flat}}^{\|\cdot\|_{*}}$, since weak-star and weak topologies coincide in reflexive spaces.
\end{remark}

\subsection{Classification and examples} \label{Cl}
\vskip5mm
There are several topological studies of asymmetric normed spaces, see for instance \cite{A}, \cite{Cob} and \cite{JS}.  Our study leads to the classification given in Definition \ref{def1}  and the commentary which follows it (see introduction). Recall that the two possible situations which go beyond the classical framework of normed spaces are:

\begin{enumerate}[$(i)$]
    \item Infinite dimensional spaces which are $T_1$ with $c(X)=0$ (spaces of type ~II).
    \item Finite and infinite dimensional spaces $X$ which are not $T_1$ (spaces of type ~III, necessarily $c(X)=0$).
\end{enumerate}

These affirmations are consequences of  Corollary \ref{Car}, Proposition \ref{ok2} and Theorem \ref{ok3}. Let $(X,\|\cdot|)$ be  an asymmetric normed linear space endowed with the topology $\tau_{\|\cdot|}$ induced by the quasi-metric defined by 
$$d_{\|\cdot|}(x,y):=\|y-x|, \forall x, y \in X.$$
The closed unit ball $\overline{B}_{\|\cdot|_X}(0,1)$ is the set $\lbrace y\in X: \|y|\leq 1\rbrace$. 

A set $ K \subset X$ is said to be compact if it is compact considered as a subspace of $X$ with the induced topology, that is, $(K, \|\cdot|)$  is compact with respect to the topology ${\tau_{\|\cdot|}}_{|K}$. A set $K$ of $X$ is compact if every sequence in $K$ has a convergent subsequence whose limit is in $K$.

The following proposition shows that a finite dimensional asymmetric nor\-med space can never be of type ~II. 
\begin{proposition} \label{ok2} Let $(X,\|\cdot|_X)$  be an asymmetric normed space. Suppose that $X$ is of type II. Then, the closed unit balls $\overline{B}_{\|\cdot|_X}(0,1)$ and $\overline{B}_{\|\cdot\|_s}(0,1)$ of $X$ and its associated normed space respectively, are not compact, and consequently, $X$ is infinite dimensional.
\end{proposition}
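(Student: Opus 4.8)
The plan is to extract from the hypothesis $c(X)=0$ a single sequence on the unit sphere that simultaneously witnesses the failure of sequential compactness of both balls, and then to use the $T_1$ assumption, in the form given by Proposition \ref{ok1}, to exclude every candidate limit.

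Since $c(X)=\inf_{\|x|_X=1}\|-x|_X=0$, I would first fix a sequence $(x_n)\subset X$ with $\|x_n|_X=1$ for all $n$ and $\|-x_n|_X\to 0$. Because $X$ is $T_1$, Proposition \ref{ok1} gives $\hat S_X=S_X$, so $\|-x_n|_X>0$ for every $n$; moreover, after discarding finitely many terms I may assume $\|-x_n|_X<1$, whence $\|x_n\|_s=\max\{\|x_n|_X,\|-x_n|_X\}=1$ for all $n$. Thus $(x_n)$ lies on the unit sphere of both $(X,\|\cdot|_X)$ and $(X,\|\cdot\|_s)$, and in particular inside both closed unit balls.

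For $\overline{B}_{\|\cdot|_X}(0,1)$ I would argue by contradiction: if it were compact in $\tau_{\|\cdot|}$, then $(x_n)$ would admit a subsequence $(x_{n_k})$ converging in $\tau_{\|\cdot|}$ to some $y$ with $\|y|_X\le 1$, i.e. $\|x_{n_k}-y|_X\to 0$. Writing $-y=(x_{n_k}-y)+(-x_{n_k})$ and using subadditivity gives $\|-y|_X\le \|x_{n_k}-y|_X+\|-x_{n_k}|_X\to 0$, so $\|-y|_X=0$; while $\|x_{n_k}|_X\le \|y|_X+\|x_{n_k}-y|_X$ forces $1\le\|y|_X$, hence $\|y|_X=1$ and $y\neq 0$. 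This contradicts the $T_1$ property, since by Proposition \ref{ok1} a $T_1$ space satisfies $\|z|_X=0\iff z=0$, applied to $z=-y$. The argument for $\overline{B}_{\|\cdot\|_s}(0,1)$ in the norm topology is parallel: a $\|\cdot\|_s$-convergent subsequence $x_{n_k}\to x$ in the ball would satisfy $\|x\|_s=\lim_k\|x_{n_k}\|_s=1$ by continuity of the norm, hence $x\neq 0$, while $\|-x|_X\le \|-x_{n_k}|_X+\|x_{n_k}-x|_X\le \|-x_{n_k}|_X+\|x_{n_k}-x\|_s\to 0$ (using $\|\cdot|_X\le\|\cdot\|_s$) yields $\|-x|_X=0$, again contradicting $T_1$.

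Finally, since $\overline{B}_{\|\cdot\|_s}(0,1)$ is the closed unit ball of the normed space $X_s=(X,\|\cdot\|_s)$ and it is not compact for the norm $\|\cdot\|_s$, the classical Riesz theorem forces $X_s$, and therefore $X$, to be infinite dimensional. I do not expect a genuine obstacle here: the only point requiring care is the asymmetric framework, where one must keep track of the direction of the quasi-metric so that convergence in $\tau_{\|\cdot|}$ means $\|x_{n_k}-y|_X\to 0$, and must apply the one-sided triangle inequality in the correct order; beyond this bookkeeping the proof is routine.
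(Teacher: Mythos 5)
Your proof is correct and follows essentially the same route as the paper's: the same witness sequence $(x_n)$ with $\|x_n|_X=1$ and $0<\|-x_n|_X\to 0$, the same one-sided triangle inequality $\|-y|_X\le\|x_{n_k}-y|_X+\|-x_{n_k}|_X$ to force $\|-y|_X=0$ for any limit $y$, the same use of $T_1$ to exclude it, and Riesz's theorem to conclude. The only cosmetic difference is that you rule out the limit by showing $\|y|_X=1$ via the reverse triangle inequality before invoking $T_1$, whereas the paper first concludes $a=0$ and then contradicts $\|x_{n_k}-a|_X=1$; both are equivalent.
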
 
\begin{proof} From the definition of spaces of type II, there exists a sequence $(x_n)\subset X$ such that $\|x_n|_X=1$ for all $n\in \N$ and $0< \|-x_n|_X \to 0$. We can assume without loss of generality that $0< \|-x_n|_X<1$ so that $\|x_n\|_s=\|x_n|_X=1$ for all $n\in \N$. Suppose by contradiction that $B_{\|\cdot|_X}(0,1)$ is compact.  Let $(x_{n{_k}})$ be a subsequence converging for $\|\cdot|_X$ to some $a\in X$.Then, $$\|-a|_X\leq \|x_{n{_k}}-a|_X+\|-x_{n{_k}}|_X \to 0,$$ which implies that $\|-a|_X=0$. Since $X$ is a $T_1$ space, then $a=0$. This contradict the fact that $\|x_{n{_k}}-a|_X=\|x_{n{_k}}|_X=\|x_{n{_k}}\|_s=1$ for each $k\in \N$. Hence, the sequence $(x_n)$ has no convergent subsequence neither for $\|\cdot|_X$  nor for $\|\cdot\|_s$ (since $\|\cdot|_X\leq \|\cdot\|_s$). Thus, the closed unit balls $B_{\|\cdot|_X}(0,1)$ and $B_{\|\cdot\|_s}(0,1)$  are not compact. In particular $X$ is of infinite dimentional. 
\end{proof}
The following theorem shows that a $T_1$ space of finite dimension is necessarily isomorphic to its associated normed space, or equivalently, it is of type ~I.
\begin{theorem}\label{ok3}  Let $(X,\|\cdot|_X)$  be an asymmetric normed space of finite dimension. Then, $X$ is $T_1$ if and only if $X$ is of type I, if and only if $X$ is isomorphic to its associated normed space. 
\end{theorem}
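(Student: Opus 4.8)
The plan is to recognize that the three-term equivalence collapses, once ``type I'' is identified with the condition $c(X)>0$ via Definition \ref{def1}, into two links that are essentially already available in the preceding sections. So first I would restate the target as the chain
\[
X \text{ is } T_1 \iff X \text{ is of type I} \iff (X,\|\cdot|_X)\simeq (X,\|\cdot\|_s),
\]
and dispose of the middle equivalence at once: by Definition \ref{def1}, the assertion ``$X$ is of type I'' means precisely $c(X)>0$, so nothing is to be proved there.

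Next I would establish $X \text{ is } T_1 \iff c(X)>0$, and this is the only place where the finite dimension of $X$ enters. The forward direction $c(X)>0 \Rightarrow X \text{ is } T_1$ holds in full generality and is exactly Proposition \ref{index}. For the reverse direction I would invoke Lemma \ref{T1}, which asserts that for a finite dimensional space one has $c(X)=0 \iff X \text{ is not } T_1$; taking the contrapositive and recalling $c(X)\in[0,1]$ (Proposition \ref{index}) yields $X \text{ is } T_1 \Rightarrow c(X)>0$. The genuinely analytic ingredient is already packaged inside Lemma \ref{T1}: compactness of the $\|\cdot\|_s$-unit sphere in finite dimension lets one extract, from a sequence with $\|x_n|_X=1$ and $\|-x_n|_X\to 0$, a $\|\cdot\|_s$-limit $e$ with $\|e\|_s=1$ and $\|-e|_X=0$, which contradicts the $T_1$ property.

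Finally I would close the loop with the third condition. The equivalence
\[
c(X)>0 \iff (X,\|\cdot|_X)\text{ is isomorphic to its associated normed space}
\]
is precisely the equivalence $(i)\Longleftrightarrow(ii)$ of Corollary \ref{Car}, which requires no finite dimensional hypothesis: one direction is formula (\ref{eq2}) of Proposition \ref{tex}, and the other rests on Theorem \ref{Main}. Chaining the three equivalences then delivers the statement.

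I expect no serious obstacle, since the result is an assembly of Lemma \ref{T1} (carrying the finite dimensional compactness step) and Corollary \ref{Car}. Should a self-contained proof be preferred, I would simply inline the compactness extraction of Lemma \ref{T1} in place of citing it, the remainder being purely formal.
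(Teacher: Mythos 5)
Your proof is correct and follows essentially the same route as the paper's: both reduce the theorem to the implication ``$T_1$ plus finite dimension $\Rightarrow c(X)>0$'' (the only place where compactness of the finite-dimensional unit sphere is used) and then invoke Corollary \ref{Car} for the equivalence with being isomorphic to the associated normed space. The only cosmetic difference is that you cite Lemma \ref{T1} for the compactness step, whereas the paper cites Proposition \ref{ok2} (type II spaces are infinite dimensional); these two statements are contrapositive packagings of the same extraction argument, so the mathematical content is identical.
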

\begin{proof} Suppose that $X$ is $T_1$, then, $X$ is not of type III. Since, spaces of type II are infinite dimensional by Proposition \ref{ok2}, it follows that $X$, is of type I. Hence, equivalently, by Corollary \ref{Car}, $X$ is isomorphic to its associated normed space. The converse is trivial.
\end{proof}
We recover the result of Garc\'ia-Raffi in \cite[Theorem 13.]{G} in the following corollary.
\begin{corollary} The closed unit ball of a $T_1$ asymmetric normed space $X$ is compact, if and only if it is finite dimensional. 
\end{corollary}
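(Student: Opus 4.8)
The plan is to reduce everything to the classification of $T_1$ spaces and the classical Riesz compactness dichotomy for normed spaces. Recall that a $T_1$ asymmetric normed space is, by Definition \ref{def1}, either of type I (so $c(X)>0$) or of type II (so $c(X)=0$). The guiding idea is that when $c(X)>0$ the asymmetric topology $\tau_{\|\cdot|}$ coincides with the topology of the associated normed space $X_s=(X,\|\cdot\|_s)$ (Proposition \ref{tex} and the comment following it), so compactness questions transfer verbatim to the classical setting; while the type II case is already handled by Proposition \ref{ok2}.

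For the \emph{if} direction, I would assume $X$ finite dimensional. Since $X$ is $T_1$, Theorem \ref{ok3} forces $X$ to be of type I, hence $c(X)>0$ and $\tau_{\|\cdot|}=\tau_{\|\cdot\|_s}$. Using formula (\ref{eq2}), the condition $\|x|_X\le 1$ yields $\|x\|_s\le 1/c(X)$, so $\overline{B}_{\|\cdot|_X}(0,1)\subseteq\overline{B}_{\|\cdot\|_s}(0,1/c(X))$. The right-hand ball is $\|\cdot\|_s$-compact because $X_s$ is finite dimensional, and $\overline{B}_{\|\cdot|_X}(0,1)$ is $\|\cdot\|_s$-closed, since the map $x\mapsto\|x|_X$ is $1$-Lipschitz for $\|\cdot\|_s$ by subadditivity and the bound $\|\cdot|_X\le\|\cdot\|_s$. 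A $\|\cdot\|_s$-closed subset of a $\|\cdot\|_s$-compact set is compact, and as the two topologies agree this compactness holds for $\tau_{\|\cdot|}$ as well.

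For the \emph{only if} direction I would argue by contraposition, assuming $X$ infinite dimensional and $T_1$ and showing $\overline{B}_{\|\cdot|_X}(0,1)$ is not compact, splitting along the two types. If $X$ is of type II, Proposition \ref{ok2} delivers non-compactness immediately. If $X$ is of type I, then $c(X)>0$, the topologies coincide, and formula (\ref{eq2}) gives the reverse inclusion $\overline{B}_{\|\cdot\|_s}(0,1)\subseteq\overline{B}_{\|\cdot|_X}(0,1)$. Were the asymmetric ball compact, it would be $\|\cdot\|_s$-compact, and then its $\|\cdot\|_s$-closed subset $\overline{B}_{\|\cdot\|_s}(0,1)$ would be compact too, contradicting the Riesz theorem since $X_s$ is infinite dimensional.

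The only genuine subtlety, and the step to handle with care, is bookkeeping the topology in which ``compact'' is meant: the statement refers to the asymmetric topology, so the argument must pass through the equality $\tau_{\|\cdot|}=\tau_{\|\cdot\|_s}$, which is valid precisely when $c(X)>0$. Once that identification is secured, the finite-dimensional case collapses to the classical Riesz compactness theorem for $X_s$, and the remaining type II possibility is absorbed by Proposition \ref{ok2}.
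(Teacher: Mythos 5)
Your proof is correct and follows essentially the same route as the paper's: reduce via the type I/II dichotomy, invoke Proposition \ref{ok2} to exclude type II, and use Theorem \ref{ok3} (equivalently Corollary \ref{Car}) together with Riesz's theorem for the type I case. The extra bookkeeping you supply (the inclusions between the asymmetric and symmetric balls from formula (\ref{eq2}) and the $\|\cdot\|_s$-closedness of $\overline{B}_{\|\cdot|_X}(0,1)$) only makes explicit steps the paper leaves implicit.
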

\begin{proof} Suppose that $X$ is finite dimensional. Since $X$ is $T_1$, then by Theorem \ref{ok3}, $X$ is isomorphic to its associated normed space. Thus, the closed unit ball of $(X,\|\cdot|_X)$  is compact. Conversely, suppose that the closed unit ball of $(X,\|\cdot|_X)$  is compact. Then, by Proposition \ref{ok2}, $X$ is not of type II. Since, $X$ is $T_1$, then $X$ is of type I and so  $(X,\|\cdot|_X)$ is isomorphic to $(X,\|\cdot\|_s)$ (by Corollary \ref{Car}), which is finite dimentional by Riesz's theorem, since its closed unit ball is compact.
\end{proof}

We give below examples corresponding to spaces of  type I, II and III. 

\begin{Exemp} \label{ex0} {\it Finite dimensional space of type III: Case where $\hat{S}_X= \emptyset$ and $c(X)=0$.}  Let $X=\R$ and $\|t|_{\R}:=\max\{0,t\}$ for all $t\in \R$. Then, $(\R, \|\cdot|_{\R})$ is an asymmetric normed space with $\hat{S}_X= \emptyset$ and $(X^{\flat},\|\cdot|_{\flat})$ is not  a vector space.
\end{Exemp}
\begin{Exemp} \label{ex1} {\it  Infinite dimensional space of type III: Case where $\emptyset \neq \hat{S}_X \subsetneq S_X$ and $c(X)=0$.}  Let $X=C_0[-1,1]$ the space of all continuous functions from $[-1,1]$ to $\R$ such that $f(0)=0$. We define on $X$ the following asymmetric norm
$$\|f|:=\sup_{x\in [-1,1]} f(x)\leq \|f\|_{\infty}=\max\{\|f|,\|-f|\}.$$
It is easy to see that $\hat{S}_X\neq \emptyset$,   $\hat{S}_X\neq S_X$ and $c(X)=0$. Let us denote by $\delta_x: X\to \R$ the evaluation map associated to $x\in [-1,1]$ defined by $\delta_x(f)=f(x)$ for all $f\in X$. Clearly, $\delta_x \in X^{\flat}$ and $\|\delta_x|_{\flat}= 1$ for all $x\in [-1,1]$. However, $-\delta_x \not \in X^{\flat}$, for all $x\in [-1,1]\setminus \lbrace 0 \rbrace$. It follows that $X^{\flat}$ is not a vector space.
\end{Exemp}
\begin{Exemp} \label{ex2} {\it  Space of type II: Case where $\hat{S}_X= S_X$ and $c(X)=0$.}  The space  $(l^{\infty}(\N^*),\|x|_{\infty})$ given is Example \ref{ex} is a space of type II.
\end{Exemp}
\begin{Exemp} \label{ex4} {\it  Space of type I: Case where $0<c(X)<1$.}  Let $(X,\|\cdot|)$ be an asymmetric normed space. Define a new asymmetric norm on $X$ as follows: $\|x|_1=\|x|+\|x\|_s$, where $\|x\|_s=\max\{\|x|,\|-x|\}$, for all $x\in X$. Then, the index of symmetry $c(X,\|\cdot|_1)$ of $X$ for the asymmetric norm $\|\cdot|_1$, satisfies $0<c(X,\|\cdot|_1)<1$. First, we see that $c(X,\|\cdot|_1)<1$ since $\|\cdot|_1$ is not a norm. Suppose that $c(X,\|\cdot|_1)=0$, there exists $(x_n)\subset X$ such that $\|x_n|+\|x_n\|_s=1$ for all $n\in \N$ and $\|-x_n|+\|-x_n\|_s\to 0$. This implies that $\|x_n\|_s=\|-x_n\|_s\to 0$. Since $\|x_n|\leq\|x_n\|_s$, it follows that $\|x_n|+\|x_n\|_s\to 0$, which is a contradiction. Recall that, in every asymmetric normed space, the condition  $c(X)>0$ implies that $\hat{S}_X=S_X$ (see Proposition \ref{index}). The dual of $(X,\|\cdot|_1)$ is a vector space by Corollary \ref{Car}.
\end{Exemp}
\vskip5mm

\section*{Acknowledgment}
The authors are grateful to the referee for these valuable remarks as well as for pointing out a mistake in a first version that we have corrected thanks to his remarks.  

This research has been conducted within the FP2M federation (CNRS FR 2036) and during a research visit of the second author to the SAMM Laboratory of the University Paris Sorbonne-Panthéon. This author thanks his hosts for hospitality. The second author was supported by the grants CONICYT-PFCHA/Doctorado Nacional/2017-21170003, FONDECYT Regular 1171854 and CMM-CONICYT AFB170001.
 
\bibliographystyle{amsplain}

\end{document}